\pgfplotsset{compat=1.16}
\newtheorem{lemma}{Lemma}
\newtheorem{proposition}[lemma]{Proposition}
\newtheorem{corollary}[lemma]{Corollary}
\newcommand{\xqedhere}[2]{%
\rlap{\hbox to#1{\hfil\llap{\ensuremath{#2}}}}}
\begin{document}

\begin{frontmatter}

\title{Boundary parameter matching for isogeometric analysis using Schwarz–Christoffel mapping}

\author[inst1]{Ye Ji \corref{cor}}
\ead{y.ji-1@tudelft.nl}
\cortext[cor]{Corresponding author}
\author[inst1]{Matthias M\"oller}
\ead{m.moller@tudelft.nl}
\author[inst2]{Yingying Yu}
\ead{yuyydl@lnnu.edu.cn}
\author[inst3]{Chungang Zhu}
\ead{cgzhu@dlut.edu.cn}
\address[inst1]{Delft University of Technology, Delft Institute of Applied Mathematics, Delft, 2628CD, the Netherlands.}
\address[inst2]{School of Mathematics, Liaoning Normal University, Dalian 116029, China.}
\address[inst3]{School of Mathematical Sciences, Dalian University of Technology, Dalian, 116024, China}

\begin{abstract}
Isogeometric analysis has brought a paradigm shift in integrating computational simulations with geometric designs across engineering disciplines. This technique necessitates analysis-suitable parameterization of physical domains to fully harness the synergy between Computer-Aided Design and Computer-Aided Engineering analyses. The existing methods often fix boundary parameters, leading to challenges in elongated geometries such as fluid channels and tubular reactors. This paper presents an innovative solution for the boundary parameter matching problem, specifically designed for analysis-suitable parameterizations. We employ a sophisticated Schwarz-Christoffel mapping technique, which is instrumental in computing boundary correspondences. A refined boundary curve reparameterization process complements this. Our dual-strategy approach maintains the geometric exactness and continuity of input physical domains, overcoming limitations often encountered with the existing reparameterization techniques. By employing our proposed boundary parameter method, we show that even a simple linear interpolation approach can effectively construct a satisfactory analysis-suitable parameterization. Our methodology offers significant improvements over traditional practices, enabling the generation of analysis-suitable and geometrically precise models, which is crucial for ensuring accurate simulation results. Numerical experiments show the capacity of the proposed method to enhance the quality and reliability of isogeometric analysis workflows.
\end{abstract}

\begin{keyword}
Isogeometric analysis \sep Analysis-suitable parameterization \sep Schwarz-Christoffel mapping \sep Boundary correspondence
\end{keyword}

\end{frontmatter}


\section{Introduction}
\label{sec1:introduction}

Isogeometric analysis (IGA) represents a revolutionary development in the integration of Computer-Aided Design (CAD) and Computer-Aided Engineering (CAE), offering a transformative approach across various engineering disciplines \cite{hughes2005isogeometric, cottrell2009isogeometric}. IGA distinguishes itself from the conventional finite element method by utilizing consistent spline-based basis functions for both geometric modeling and numerical simulation. This seamless integration circumvents the need for converting spline-based CAD models into linear mesh models, thereby preserving geometric integrity and eliminating potential errors in the analysis phase.

A pivotal first step in the IGA workflow is to construct an analysis-suitable spline-based parameterization $\bm{x}(\bm{\xi})$ from the Boundary Representation (B-Rep) of the physical domain $\Omega$ \cite{cohen2010analysis}. Research indicates that the quality of this parameterization significantly influences the accuracy and efficiency of subsequent analyses \cite{xu2013optimal, pilgerstorfer2014bounding}. For an analysis-suitable parameterization, ensuring bijectivity is of utmost importance, followed by the minimization of angle and area distortions. Over the past decade, various approaches have been developed \cite{xu2011parameterization, xu2014high, hinz2018elliptic, pan2019isogeometric, liu2020simultaneous, ji2023improved, pan2023g1}.

Many established methods in this field assume that input boundary curves remain unchanged. However, the parameterization speed of these curves plays a crucial role in the quality of the resulting parameterization. It is particularly true for elongated and thin physical domains, as illustrated in Figure~\ref{fig1a:chordLength}. A common challenge arises from designers focusing primarily on the immediate curves, often neglecting the parameterization speed of their opposite counterparts. This oversight frequently leads to a manual determination of boundary correspondences, a practice that limits both automation and efficiency in the parameterization process. Our work addresses this gap, proposing a method that enhances automation and streamlines the parameterization process.

The quality of boundary correspondence has been recognized as a pivotal factor in analysis-suitable parameterization, directly influencing the quality of domain parameterization and then the accuracy of subsequent analyses. Zheng et al. pioneered a method leveraging optimal mass transportation theory to enhance boundary correspondence, primarily focusing on selecting the four corner points \cite{zheng2019boundary}. Zhan et al. utilized a deep neural network approach for corner point selection in planar parameterizations. Both of these methodologies typically commence with selecting four corner points for planar parameterization, followed by reliance on conventional chord-length parameterization to determine the speed of boundary parameters. This approach, however, presents a notable limitation: it can constrain the overall quality and accuracy of the parameterization, which is clearly illustrated in Figure~\ref{fig1a:chordLength}.

\begin{figure}[H]
\centering
\subfigure[Conventional chord-length parameterization]{\includegraphics[width=0.45\linewidth]{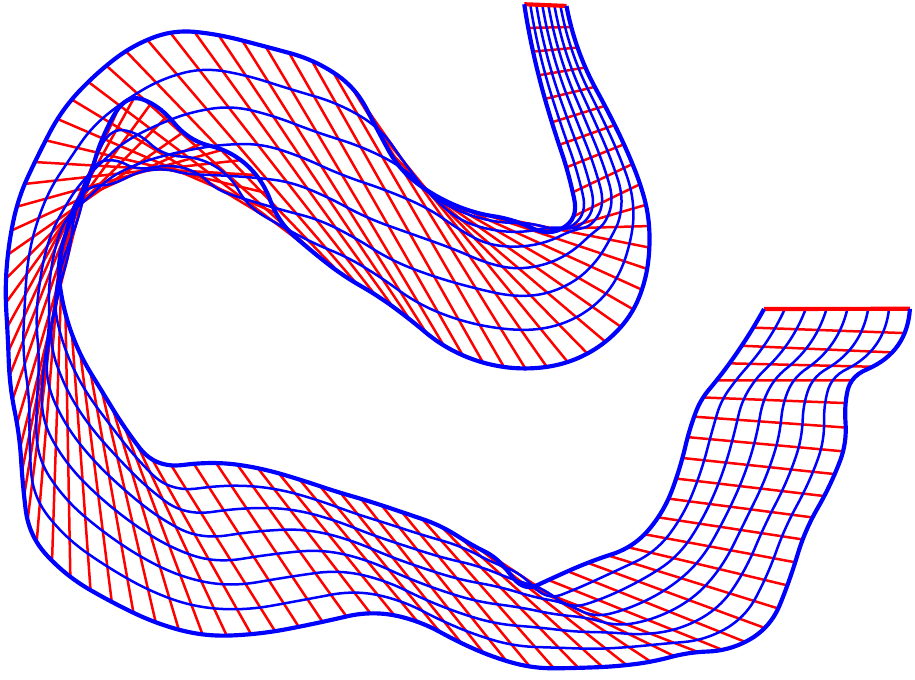}
\label{fig1a:chordLength}}
\quad
\subfigure[Proposed boundary parameter matching method]{\includegraphics[width=0.45\linewidth]{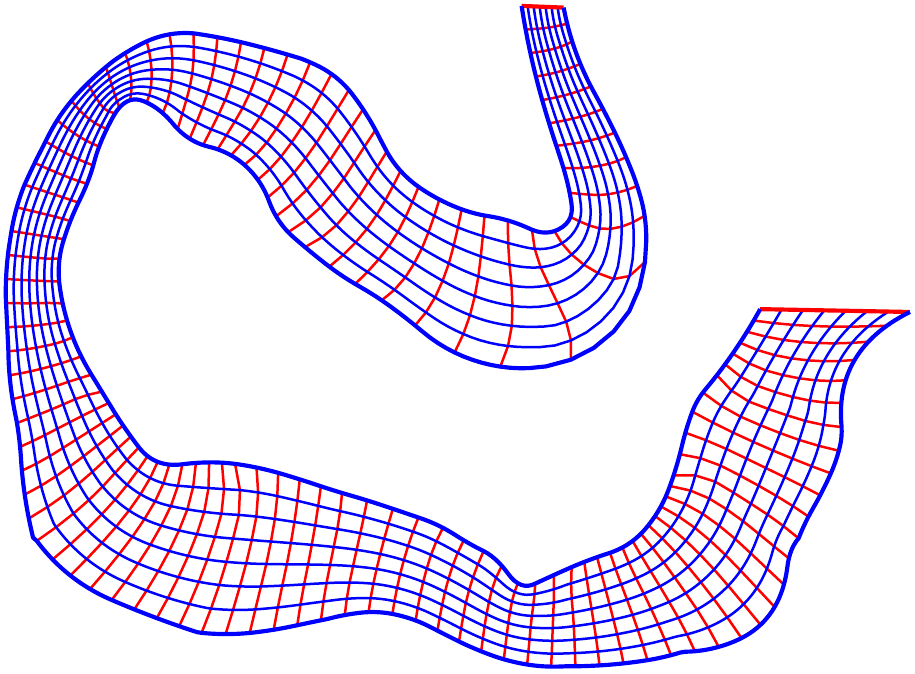}
\label{fig1b:ourResult}}
\caption{Boundary parameter matching problem (Continental Shelf of the Gulf of Mexico): (a) Utilizing conventional chord-length parameterization for the boundary curves, (b) Applying the proposed Schwarz-Christoffel mapping approach for enhanced boundary correspondence.}
\label{fig1:boundary_correspondence_comparison}
\end{figure}

This paper presents a novel approach to address the boundary parameter matching problem for elongated physical domains in isogeometric analysis-suitable parameterization, focusing specifically on the parameter speed of boundary curves. Our method employs the Schwarz-Christoffel (SC) mapping, a specialized form of conformal mapping that has not been previously applied in the IGA framework. This method facilitates precise boundary correspondence, as demonstrated in Figure~\ref{fig1b:ourResult}. To the best of our knowledge, this study is the first to tackle the boundary parameter matching issue in IGA.

The main contributions of this paper are as follows:
\begin{itemize}
    \item Schwarz-Christoffel (SC) mapping is employed to compute markers on input boundary curves. The proposed approach transforms complex NURBS-represented boundary curves into simplified polygons, followed by computing a SC mapping from these polygons to a unit disk. The existence and uniqueness of the SC mapping are underpinned by the well-known Riemann mapping theorem.
    \item A boundary reparameterization scheme is developed, which actively recalibrates parameters to achieve a harmonious alignment between boundary curves. This scheme is specifically designed to maintain the geometric accuracy of the original boundaries. The theoretical proof is given to demonstrate that our proposed scheme preserves the exact geometry of the input boundaries.
    \item Our findings suggest that satisfactory parameterization results can be achieved with a straightforward linear interpolation-based method, offering an alternative to more advanced analysis-suitable parameterization techniques. This is a significant insight, as it implies that simpler methods, when applied effectively, can still yield high-quality parameterizations. The effectiveness of this approach is further validated by numerical experiments, which demonstrate notable improvements in parameterization quality using the proposed method.
\end{itemize}

The remainder of the paper is organized as follows. Section~\ref{sec2:related work} provides a comprehensive survey of existing literature on analysis-suitable parameterization and SC mapping. In Section~\ref{sec3:key ideas}, we define the specific problem our study addresses and introduce the key ideas underlying our proposed method. Section~\ref{sec4:method} presents an in-depth exploration of our methodology. The results and comparative analysis of different methods are discussed in Section~\ref{sec5:experiments}, demonstrating the efficacy of our approach. Section~\ref{sec6:applications} explores the practical applications of our proposed method in solid modeling and IGA simulations. This paper concludes in Section~\ref{sec7:conclusions} with a summary of our key findings and insights into future research directions.

\section{Related work}
\label{sec2:related work}

This section provides a comprehensive review of current methodologies in parameterization as applied in IGA. Next, we will explore the diverse applications and theoretical advancements of SC mapping.

\subsection{Review for analysis-suitable parameterization in IGA}
\label{sec201:review_ASParam}

The importance of parameterization quality in enhancing the accuracy and efficiency of downstream analyses has been emphasized in various studies \cite{cohen2010analysis, xu2013optimal, pilgerstorfer2014bounding}. Algebraic methods, such as Coons patches \cite{farin1999discrete} and Spring patches \cite{gravesen2012planar}, are commonly employed for constructing analysis-suitable parameterizations due to their simplicity and efficiency. However, these methods can sometimes result in non-bijective parameterizations, particularly for complex computational domains.

To address these limitations, a range of methods has been developed over the past decade. Xu et al. \cite{xu2011parameterization} pioneered a nonlinear constrained optimization framework, marking a significant advancement in this field. However, this method often involves a large number of constraints, leading to substantial computational demands. To mitigate this, Wang and Qian \cite{wang2014optimization} and Pan et al. \cite{pan2020volumetric} introduced strategies that effectively reduce computational burdens. Utilizing barrier-type objective functions, Ji et al. efficiently eliminated foldovers through solving a simple unconstrained optimization problem \cite{ji2021constructing}. An alternative approach involves the use of penalty functions and Jacobian regularization techniques, tracing back to Garanzha et al.'s work in mesh untangling \cite{garanzha1999regularization, garanzha2021foldover}. Wang and Ma \cite{wang2021smooth} adopted this strategy, thereby avoiding additional foldover elimination steps. Ji et al. improved upon this by introducing a new penalty term that minimizes numerical errors from previous iterations \cite{ji2022penalty}. Beyond the barrier-type objective function, other studies have delved into quasi-conformal theories, such as Teichm\"{u}ller mapping \cite{nian2016planar} and the low-rank quasi-conformal method \cite{pan2018low}. Additionally, Martin et al. utilized discrete harmonic functions for trivariate B-spline solids \cite{martin2009volumetric}, while Nguyen and J\"uttler \cite{nguyen2010parameterization} and Xu et al. \cite{xu2013constructing} explored sequences and variational methods of harmonic mapping, respectively. Falini et al. approached the problem by computing harmonic maps from physical domains to parametric domains using boundary element methods, followed by inverse mapping approximations via least-squares fitting \cite{falini2015planar}. Building on the principles of Elliptic Grid Generation (EGG), Hinz et al. proposed PDE-based methods that excel in domains with extreme aspect ratios due to their robust convergence properties \cite{hinz2018elliptic, hinz2020pde}. Further advancing this domain, Ji et al. introduced an enhanced elliptic PDE-based parameterization technique, notable for its speed and ability to produce uniform elements near concave/convex boundary regions in general domains \cite{ji2023improved}. Zhang et al. \cite{zhang2012solid, zhang2013conformal} and Liu et al. \cite{liu2014volumetric} developed volumetric spline-based parameterizations for IGA, utilizing T-splines known for their local refinement capabilities. In addition, Xu et al. \cite{xu2019efficient} and Ji et al. \cite{ji2022curvature} introduced $r$-adaptive methods that focus on minimizing curvature metrics.


The aforementioned studies presuppose the availability of boundary representations and maintain these input boundary curves and surfaces in their original fixed form. Establishing a coherent correspondence between distinct shapes is a fundamental challenge in shape analysis, various methods have been established during past decades \cite{van2011survey, sahilliouglu2020recent}. However, these traditional methods fall short in the context of our research as the underlying assumptions are not applicable. Liu et al. allow the parameterization of boundary curves may be movable by optimize the boundary and interior control points simultaneously \cite{liu2020simultaneous}. Zheng et al. introduced an automated technique for boundary correspondence in isogeometric analysis-suitable parameterization, grounded in optimal mass transportation theory \cite{zheng2019boundary}. This approach primarily focuses on identifying appropriate corner placements and then applies chord-length methods for boundary reparameterization, which may lead to outcomes as depicted in Figure~\ref{fig1a:chordLength}. Building on this, Zheng et al. extended the concept to volumetric cases \cite{zheng2021volumetric}, while Zhan et al. innovated the identification of corner points in the physical domain using deep neural networks \cite{zhan2023boundary}.

This paper proceeds with the fundamental assumption that the four corner points of the domain are predetermined. The key contribution of our work is in calculating the boundary parameter correspondence, a critical aspect for achieving isogeometric analysis-suitable parameterization.

\subsection{Review for Schwarz-Christoffel mapping}
\label{sec202:review_SCMap}

Conformal mapping, particularly the Schwarz-Christoffel (SC) mapping, plays a vital role due to its theoretical significance and wide-ranging practical applications \cite{lopez2022parallel}. Central to the numerical computation of SC mapping is the challenging Schwarz-Christoffel parameter problem \cite{trefethen1980numerical}. To address this complexity, a myriad of computational techniques has been developed, with numerical conformal mapping emerging as a particularly effective solution. To surmount computational challenges associated with SC mapping, Driscoll and Vavasis introduced an innovative approach utilizing Cross-Ratios and Delaunay Triangulation (CRDT) algorithms \cite{driscoll1998numerical}. This method was further extended by Delillo and Kropf to accommodate multiply connected domains \cite{delillo2011numerical}. As for the available implementations of SC mapping, the SCPACK package \cite{Trefethoen1983SCPACK}, originally developed in Fortran, represents an early solution. The SC Toolbox \cite{driscoll1996algorithm} in MATLAB and its subsequent open-source version \cite{driscoll2005algorithm} are notable advancements. The computational speed of SC mapping has been further accelerated by Banjai and Trefethen \cite{banjai2003multipole} through the use of a multipole method. Moreover, Anderson extended SC mapping to non-polygonal domains \cite{andersson2008schwarz}. For an in-depth understanding of SC mapping, the monograph by Driscoll and Trefethen \cite{driscoll2002schwarz} is a valuable reference.

\section{Problem statement and key ideas}
\label{sec3:key ideas}

In this section, the focus is on defining the specific problem addressed by our research, which establishes the notation that will be used consistently throughout the paper. Additionally, we introduce the key ideas that underpin our proposed methodology.

\subsection{Problem statement}
\label{sec301:problem statement}

In numerous industrial applications, domains characterized by a pronounced, elongated shape with extreme aspect ratios are commonly encountered. Such configurations are prevalent in channels, conduits within reactors, and waterways, as illustrated in Figure~\ref{fig1:boundary_correspondence_comparison}. In these scenarios, a reasonable boundary correspondence across opposing long boundary curves is crucial. The parameter speed along these boundaries significantly influence the quality and bijectivity of the derived parameterization. Our study specifically addresses this challenge, seeking to optimize boundary parameter matching and ensure the integrity of parameterization in such elongated domains.

Consider the real variables $\bm{x} = [x,y]^{\mathrm{T}}$ in the physical domain $\Omega$, alongside $\bm{\xi} = [\xi,\eta]^{\mathrm{T}}$, representing orthogonal real variables within the parametric domain $\hat{\Omega}$. The quartet of boundary curves defined in B-spline form, namely $\mathcal{C}^{\mathrm{West}}(\eta)$, $\mathcal{C}^{\mathrm{East}}(\eta)$, $\mathcal{C}^{\mathrm{South}}(\xi)$, and $\mathcal{C}^{\mathrm{North}}(\xi)$, with the superscript $^{*}$ representing the boundary direction, are given by
\begin{equation}
\mathcal{C}^{*}(\xi) = \sum_{i=0}^{n^{*}} \mathbf{P}_{i}^{*} R_{i,p^{*}}^{\bm{\Xi}^{*}}(\xi),
\end{equation}
where $R_{i,p^{*}}^{\bm{\Xi}^{*}}(\xi) = {\omega_i N_{i,p^{*}}^{\bm{\Xi}^{*}}(\xi)}/{\sum_{j=0}^{n^{*}} \omega_j N_{j,p^{*}}^{\bm{\Xi}^{*}}(\xi)}$ are the $i$-th degree-$p^{*}$ NURBS basis functions, and $N_{i,p^{*}}^{\bm{\Xi}^{*}}(\xi)$ denote the $i$-th degree-$p^{*}$ B-Spline basis functions defined over the knot vector $\bm{\Xi}^{*}$. Assuming, for simplicity, that the West curve $\mathcal{C}^{\mathrm{West}}(\eta)$ and the East curve $\mathcal{C}^{\mathrm{East}}(\eta)$ represent the longitudinal boundaries, our goal is to establish an analysis-suitable parameterization $\bm{x}:\ \hat{\Omega} \rightarrow \Omega$ through a coordinate transformation, expressed in B-Spline form, that adheres to the prescribed boundary conditions $\bm{x}^{-1}|_{\partial \Omega} = \partial \hat{\Omega}$. Hence, we define:
\begin{equation}
  \label{eq:3103}
  \bm{x}(\bm{\xi}) = \underbrace{ \sum_{i \in \mathcal{I}_I} \mathbf{P}_i R_i(\bm{\xi})}_{\rm unknown} + \underbrace{ \sum_{j \in \mathcal{I}_B} \mathbf{P}_j R_j(\bm{\xi})}_{\rm known},
\end{equation}
where $\mathcal{I}_I$ and $\mathcal{I}_B$ index the unknown interior and the known boundary control points, respectively,
\begin{equation}
  R_{i}(\bm{\xi}) = R_{i_1, i_2}(\xi, \eta) = \frac{\omega_{i_1, i_2} N_{i_1, p_1}^{\bm{\Xi}}(\xi) N_{i_2, p_2}^{\bm{\mathcal{H}}}(\eta)} {\sum_{i_1 = 0}^{n_1} \sum_{i_2 = 0}^{n_2} \omega_{i_1, i_2} N_{i_1, p_1}^{\bm{\Xi}}(\xi) N_{i_2, p_2}^{\bm{\mathcal{H}}}(\eta)}
\end{equation}
are bivariate tensor-product NURBS basis functions of bi-degree $(p_1,p_2)$, and $\omega_{i_1, i_2}$ are weights.

\begin{figure}[H]
    \centering
    \includegraphics[width=\linewidth]{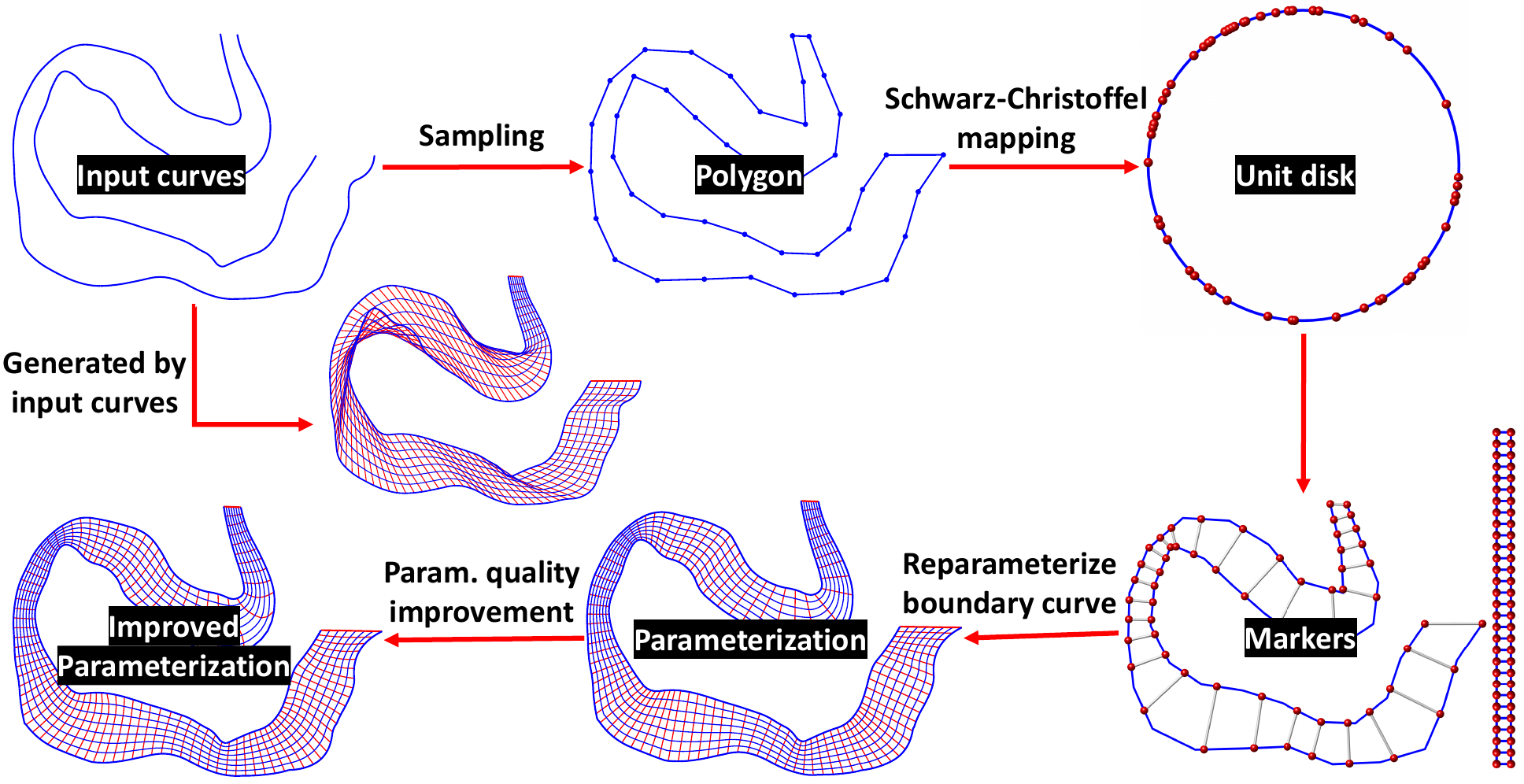}
    \caption{Workflow diagram of the proposed boundary parameter matching approach via Schwarz-Christoffel mapping.}
    \label{fig2:workflow}
\end{figure}

The differential form of the transformation is expressed as follows:
\begin{equation}
    \begin{bmatrix}
    dx \\
    dy
    \end{bmatrix} = \bm{\mathcal{J}} (\xi, \eta)
    \begin{bmatrix}
        d \xi \\
        d \eta
    \end{bmatrix}
\end{equation}
with $\bm{\mathcal{J}}(\xi, \eta)$ representing the Jacobian matrix. An orthogonal transformation is defined when
\begin{equation}
    \bm{\mathcal{J}} = \begin{bmatrix}
        h(\xi, \eta) & 0 \\
        0 & g(\xi, \eta)
    \end{bmatrix}
    \begin{bmatrix}
        \cos \theta (\xi, \eta) & - \sin \theta (\xi, \eta) \\
        \sin \theta (\xi, \eta) & \cos \theta (\xi, \eta)
    \end{bmatrix},
\end{equation}
where $h(\xi, \eta)$ and $g(\xi, \eta)$ are scalar functions that respectively define the components of the scaling transformation, and $\theta (\xi, \eta)$ represents the angle of rotation transformation.

A conformal transformation is distinguished as an orthogonal transformation that fulfills the additional condition of $g = h$. Specifically, this implies that
\begin{equation}
    \bm{\mathcal{J}} = h(\xi, \eta) 
    \begin{bmatrix}
        \cos \theta (\xi, \eta) & - \sin \theta (\xi, \eta) \\
        \sin \theta (\xi, \eta) & \cos \theta (\xi, \eta)
    \end{bmatrix}.
    \label{eq:CR_equation}
\end{equation}

Such a transformation preserves angles and is characterized by a metric that remains invariant with respect to the direction of $d \bm{\xi} = d \xi + i d\eta$. In complex context, if we define $\bm{x} = x + i y$ and $\bm{\xi} = \xi + i \eta$, then \eqref{eq:CR_equation} is essentially a statement of the well-known Cauchy–Riemann equations.

\subsection{Key ideas}
\label{sec302:key ideas}

Figure~\ref{fig2:workflow} presents the main workflow of our methodology. Central to our approach is the utilization of the Schwarz-Christoffel mapping - a conformal mapping technique. Initially, we simplify complex NURBS-represented boundary curves into a closed polygon. This step prepares the domain for the application of SC mapping. Then the polygon is numerically transposed onto a unit circle through the SC mapping process. Next, a series of markers are placed along the long boundary curves to guide the reparameterization process.

Subsequent to the mapping, we proceed with a reparameterization scheme for the boundary curves. This entails a careful recalibration of parameters, ensuring the parameterization speed of one curve is harmoniously aligned with the other. A distinguishing feature of our method is its retention to preserving both the accuracy and the continuity of the initial geometric boundaries.

Eventually, we integrate the robust and efficient parameterization technique from our previous research. This workflow yields parameterizations that are exceptionally suited to the demands of isogeometric analysis, please refer to Figure~\ref{fig2:workflow}, thereby enhancing the fidelity and utility of the analytic process.

\section{Methodology}
\label{sec4:method}

This section details the core methodology of our proposed approach. In Section~\ref{sec401:SCMapping}, we explore the foundational concepts of SC mapping. Section~\ref{sec402:SCParameterProblem} details the computational aspects of the SC parameter problem. Section~\ref{sec403:reparameterization} presents the curve reparameterization process, emphasizing its role in preserving geometric accuracy. Lastly, Section~\ref{sec404:ASParam} offers a concise overview of the improved PDE-based method.

\subsection{Schwarz–Christoffel mapping}
\label{sec401:SCMapping}



Consider $\mathcal{P}$, an open and simply connected polygon on the complex plane $\mathbb{C}$, as illustrated in the right of Figure~\ref{fig3:SC_mapping}. The Riemann mapping theorem guarantees an analytic function $f$, with a consistently non-zero derivative, mapping the open unit disk $\mathcal{D}$ onto $\mathcal{P}$, such that $f(\mathcal{D}) = \mathcal{P}$. This function $f$ is uniquely bijective over the domain $\mathcal{D}$. If we select a specific point $z_0$ in the unit disk $\mathcal{D}$ and an angle $\alpha$ in the range of $[0, 2\pi)$, then $f$ can be uniquely identified to fulfill $f(0) = z_0$ and $\arg(f'(0)) = \alpha$. Here, $z_0$ serves as the conformal center of $f$, anchoring the mapping in the complex plane.

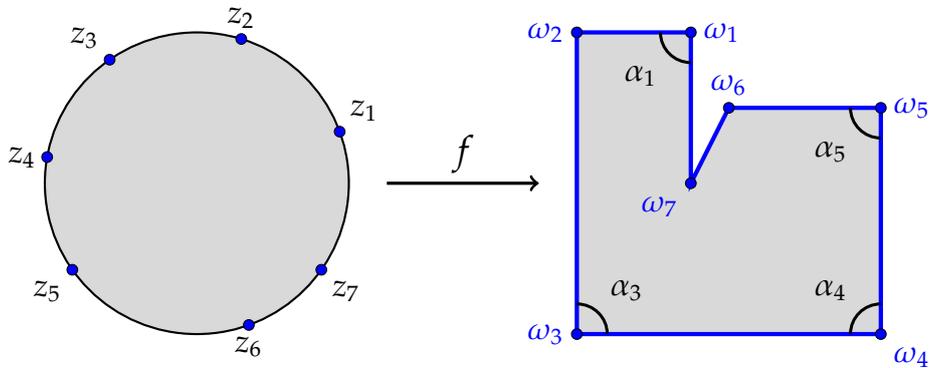
\begin{figure}[H]
    \centering
    \begin{tikzpicture}
  \draw[thick, fill=gray!30] (0,0) circle (2cm);
  
  \draw[fill=blue] (20:2cm) circle (2pt) node[anchor=south west, scale=1.2] {$z_1$};
  \draw[fill=blue] (73:2cm) circle (2pt) node[anchor=south, scale=1.2] {$z_2$};
  \draw[fill=blue] (125:2cm) circle (2pt) node[anchor=south east, scale=1.2] {$z_3$};
  \draw[fill=blue] (170:2cm) circle (2pt) node[anchor=east, scale=1.2] {$z_4$};
  \draw[fill=blue] (215:2cm) circle (2pt) node[anchor=north east, scale=1.2] {$z_5$};
  \draw[fill=blue] (290:2cm) circle (2pt) node[anchor=north, scale=1.2] {$z_6$};
  \draw[fill=blue] (325:2cm) circle (2pt) node[anchor=north west, scale=1.2] {$z_7$};
  
  \draw[->, very thick] (2.5,0) -- (4.5,0) node[midway,above] {\Large $f$};
  
  \begin{scope}[shift={(5.5,-2)}, scale=1]
    \draw[blue, ultra thick, fill=gray!30] 
    (1,4) node[right, scale=1.2] {$\omega_1$} -- 
    (-0.5,4) node[left, scale=1.2] {$\omega_2$} -- 
    (-0.5,0) node[left, scale=1.2] {$\omega_3$} -- 
    (3.5,0) node[below right, scale=1.2] {$\omega_4$} -- 
    (3.5,3) node[right, scale=1.2] {$\omega_5$} -- 
    (1.5,3) node[above, scale=1.2] {$\omega_6$} --
    (1,2) node[below left, scale=1.2] {$\omega_7$} -- cycle;

    \draw[fill=blue] (1,4) circle (2pt);
    \draw[fill=blue] (-0.5,4) circle (2pt);
    \draw[fill=blue] (-0.5,0) circle (2pt);
    \draw[fill=blue] (3.5,0) circle (2pt);
    \draw[fill=blue] (3.5,3) circle (2pt);
    \draw[fill=blue] (1.5,3) circle (2pt);
    \draw[fill=blue] (1,2) circle (2pt);

    \draw[line width=0.4mm] (1,3.6) arc (-90:-180:0.4) node[midway, below left, scale=1.2] {$\alpha_1$};
    \draw[line width=0.4mm] (-0.1,0) arc (0:90:0.4) node[midway, above right, scale=1.2] {$\alpha_3$};
    \draw[line width=0.4mm] (3.5,0.4) arc (90:180:0.4) node[midway, above left, scale=1.2] {$\alpha_4$};
    \draw[line width=0.4mm] (3.5,2.6) arc (-90:-180:0.4) node[midway, below left, scale=1.2] {$\alpha_5$};
  \end{scope}
\end{tikzpicture}
    \caption{Notational conventions for the Schwarz–Christoffel mapping.}
    \label{fig3:SC_mapping}
\end{figure}

Denote by $\omega_1 < \omega_2 < \cdots < \omega_n$ the vertices of the polygon $\mathcal{P}$ in counterclockwise order. For ease of indexing purposes, we define $\omega_{n+1} = \omega_1$ and $\omega_0 = \omega_n$. The interior angles at these vertices are represented by $\alpha_1, \alpha_2, \ldots, \alpha_n$. We define $\beta_j = \alpha_j / \pi - 1$ for each vertex $j$, where $\beta_j$ falls within the interval $(-1, 1]$. Furthermore, the sum of the interior angles must equate to a total turn of $2\pi$, satisfying $\sum_{k=1}^n \alpha_k = n - 2$. Given these constraints, a conformal mapping $f$ from the unit disk $\mathcal{D}$ to the polygon $\mathcal{P}$, as shown in Figure~\ref{fig3:SC_mapping}, can be described as \cite{driscoll2002schwarz}:
\begin{equation}
    f(z) = f(z_0) + C \int_{z_0}^{z} \prod_{j=1}^{n} \left( 1 - \frac{\zeta}{z_j} \right)^{\beta_j} d \zeta,
    \label{eq4101:SC formula}
\end{equation}
where $f(z_0)$ and $C$ are complex constants with $C \neq 0$. Here, $z_1, z_2, \ldots, z_n$ are prevertices located on the boundary of $\mathcal{D}$ in counterclockwise order, satisfying the condition $\omega_k = f(z_k)$ for each $k = 1, 2, \ldots, n$. Equation~\eqref{eq4101:SC formula} is commonly referred to as the \textit{Schwarz-Christoffel formula}.

\subsection{Schwarz-Christoffel parameter problem}
\label{sec402:SCParameterProblem}

A crucial step of the transformation \eqref{eq4101:SC formula} is the determination of the prevertices $z_j$, for $j =1, 2, \ldots, n$. These prevertices typically lack a closed-form solution, except in special cases. The transformation is characterized by $n+4$ real parameters, including the affine constants $f(z_0)$ and $C$, as well as the angles $\theta_1, \theta_2, \ldots, \theta_n$. Each $\theta_i$ denotes the argument of the complex number $\omega_i$. Due to the three degrees of freedom inherent in M\"{o}bius transformations, it is possible to arbitrarily select three prevertices, including the predetermined $z_n$. This choice results in $n-3$ remaining prevertices, which are uniquely defined and necessitate solving a system of nonlinear equations. This challenge forms the essence of the \textit{Schwarz-Christoffel parameter problem}.

Practical implementation of SC mapping often involves solving complex nonlinear equations, derived from the geometric constraints of polygons. This process, fundamental to software like SCPACK \cite{Trefethoen1983SCPACK} and the SC Toolbox for Matlab \cite{driscoll1996algorithm}, aims to match the computed properties of the polygon $\mathcal{P}$ - such as side lengths and orientation - with those of the desired polygon, thereby establishing $n-3$ conditions \cite{trefethen1980numerical}. However, two significant challenges limit the efficiency and applicability of these software packages.

Firstly, the nonlinear systems often lack a simple, solvable structure, leading to potential pitfalls such as local minima, which can significantly hinder, or even entirely prevent, the convergence of solvers. Secondly, a more critical challenge is the ``crowding'' phenomenon, a frequent occurrence in domains characterized by elongated and narrow channels \cite{howell1990modified}. This effect causes the positions of the prevertices $z_j$ to be disproportionately skewed, with their displacement being exponentially proportional to the aspect ratio of these constricted regions. Such intricacies in the conventional SC mapping approaches underscore the need for a more robust and reliable alternative. In response to these challenges, our approach adopts the Cross-Ratios of the Delaunay Triangulation (CRDT) algorithm, a method within the numerical conformal mapping paradigm, as proposed in \cite{driscoll1998numerical}. This method addresses the aforementioned issues more effectively by leveraging the stability and precision of the CRDT technique.

The CRDT hinges on the principle that the cross-ratio is invariant under conformal mapping. Consider four distinct points $a$, $b$, $c$, and $d$ in the complex plane, arranged to form a quadrilateral $abcd$ with vertices ordered counterclockwise. Additionally, let $ac$ be an interior diagonal of this quadrilateral. The cross-ratio of these points is mathematically defined as follows:
\begin{equation}
\rho(a, b, c, d) = \frac{(d - a) (b - c)}{(c - d) (a - b)}.
\end{equation}

The main computation scheme is as follows:

\begin{itemize}
    \item Step 1: Splitting the long edges of polygon $\mathcal{P}$ prevents the formation of elongated and narrow quadrilaterals whose edges align with those of the polygon. This step is crucial as it ensures the prevertices of such quadrilaterals are not densely crowded on the unit circle. After this splitting, let $n$ represent the total number of vertices.
    \item Step 2: Compute the Delaunay triangulation of the polygon $\mathcal{P}$. From this triangulation, identify the $n-3$ diagonals, denoted as $d_1, d_2, \ldots, d_{n-3}$, and their corresponding quadrilaterals $Q_1, Q_2, \ldots, Q_{n-3}$, with $Q_i = Q(d_i)$ for each $i$. For each quadrilateral $Q_i$, the vertices are represented as $\omega_{k(i,1)}, \omega_{k(i,2)}, \omega_{k(i,3)},$ and $\omega_{k(i,4)}$, where $i$ ranges from $1$ to $n-3$. Here, each four-tuple $\left( k(i,1), k(i,2), k(i,3), k(i,4) \right)$ consists of distinct indices drawn from the set ${1, 2, \ldots, n}$. The next step is to calculate $c_i$ for each quadrilateral, defined as:
    \begin{equation}
    c_i = \log \left( \left \vert \rho(\omega_{k(i,1)}, \omega_{k(i,2)}, \omega_{k(i,3)}, \omega_{k(i,4)}) \right \vert \right),
    \end{equation}
    for $i=1,2,\ldots,n-3$, where $\vert \cdot \vert$ indicates the magnitude of a complex number.
    \item Step 3: Solve the nonlinear system $\bm{\mathcal{F}} = 0$, where the $i$-th nonlinear equation is 
    \begin{equation}
        \bm{\mathcal{F}}_i = \log \left( \left \vert \rho (\zeta_{k(i,1)}, \zeta_{k(i,2)}, \zeta_{k(i,3)}, \zeta_{k(i,4)}) \right \vert \right) - c_i,
        \label{eq4203:nonlinearSystem}
    \end{equation}
    $\zeta_1, \zeta_2, \ldots, \zeta_n$ constitute the vertices of the polygonal image derived from the SC mapping formula \eqref{eq4101:SC formula}. This is based on the invariance of the cross-ratio among the vertices $\zeta$ under a similarity transformation. For integration along a straight-line path originating from the origin, the compound Gauss–Jacobi quadrature rule, as detailed by \cite{trefethen1980numerical}, is utilized. The central computational challenge in CRDT lies in effectively solving this nonlinear system $\bm{\mathcal{F}} = 0$ in \eqref{eq4203:nonlinearSystem}. In practice, we found that the simple Picard iteration converges too slowly. Therefore, we opt for nonlinear solvers that rely solely on function evaluations. Specifically, we employ the Gauss-Newton solver, enhanced with a Broyden update for the derivative $\bm{\mathcal{F}}'$, to achieve more efficient convergence.
\end{itemize}

Upon computing the SC mapping as described in Eq.~\eqref{eq4101:SC formula}, we then evaluate equation \eqref{eq4101:SC formula} to identify the marker sets $P_i^{\rm West}$ and $P_i^{\rm East}$, located on the boundary curves $\mathcal{C}^{\rm West}$ and $\mathcal{C}^{\rm East}$, respectively, as depicted in Figure~\ref{fig4a:curveReparam}. The next step in our methodology is to reparameterize one boundary curve while keeping the parameter speed of the other boundary fixed.

\subsection{Geometry-preserving curve reparameterization technique}
\label{sec403:reparameterization}

Without loss of generality, we choose to fix the West boundary curve $\mathcal{C}^{\rm West}$ and apply reparameterization to the East boundary curve $\mathcal{C}^{\rm East}$. The theoretical foundation of the reparameterization process is the invariant property of B-Spline basis functions under scaling and translation transformations, as elucidated in the following lemma:

\begin{lemma}
    Let $N^{\bm{\Xi}}_{i, p}(\xi)$ be the $i$-th degree-$p$ B-Spline basis function defined over the knot vector $\bm{\Xi}$. Consider a scaled and translated knot vector $\hat{\bm{\Xi}} = s \bm{\Xi} + t$ with $s > 0$. Then, $N^{\hat{\bm{\Xi}}}_{i, p}(s \xi + t) = N^{\bm{\Xi}}_{i, p}(\xi)$ holds.
    \label{lemma1:invariant}
\end{lemma}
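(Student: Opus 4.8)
The plan is to argue by induction on the polynomial degree $p$, using the Cox–de Boor recurrence as the definition of the B-Spline basis functions. Write $\bm{\Xi} = \{\xi_0, \xi_1, \ldots\}$ and denote the affinely transformed knots by $\hat{\xi}_k = s\xi_k + t$, so that $\hat{\bm{\Xi}} = s\bm{\Xi} + t$. The key structural observation, which I would flag at the outset, is that an affine change of variable with $s>0$ maps the half-open knot span $[\xi_k, \xi_{k+1})$ bijectively and order-preservingly onto $[\hat{\xi}_k, \hat{\xi}_{k+1})$; this is what makes the indicator functions at degree $0$ correspond exactly.

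For the base case $p=0$, I would show directly that $N^{\hat{\bm{\Xi}}}_{i,0}(s\xi + t) = N^{\bm{\Xi}}_{i,0}(\xi)$. Indeed, the defining condition $\hat{\xi}_i \le s\xi + t < \hat{\xi}_{i+1}$ reads $s\xi_i + t \le s\xi + t < s\xi_{i+1} + t$, and subtracting $t$ and dividing by $s>0$ leaves precisely $\xi_i \le \xi < \xi_{i+1}$. Hence the two piecewise-constant functions take the value $1$ on matching parameter values and $0$ elsewhere, establishing equality.

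For the inductive step, assume the claim for degree $p-1$ and substitute $s\xi + t$ into the recurrence for $N^{\hat{\bm{\Xi}}}_{i,p}$. The crucial computation is that the affine weights are invariant: the first coefficient becomes
\begin{equation}
\frac{(s\xi+t) - \hat{\xi}_i}{\hat{\xi}_{i+p} - \hat{\xi}_i}
= \frac{s(\xi - \xi_i)}{s(\xi_{i+p} - \xi_i)}
= \frac{\xi - \xi_i}{\xi_{i+p} - \xi_i},
\end{equation}
where the common factor $s$ cancels and the shift $t$ drops out of the difference; the analogous cancellation holds for the second coefficient $(\hat{\xi}_{i+p+1} - (s\xi+t))/(\hat{\xi}_{i+p+1} - \hat{\xi}_{i+1})$. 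Combining these invariant weights with the inductive hypothesis $N^{\hat{\bm{\Xi}}}_{j,p-1}(s\xi+t) = N^{\bm{\Xi}}_{j,p-1}(\xi)$ for $j \in \{i, i+1\}$ reassembles exactly the recurrence for $N^{\bm{\Xi}}_{i,p}(\xi)$, closing the induction.

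The only real subtlety, rather than a genuine obstacle, is the treatment of repeated knots, where a denominator such as $\xi_{i+p} - \xi_i$ may vanish; by the standard convention such a term is set to zero. Here I would simply note that because $s>0$, one has $\hat{\xi}_{i+p} - \hat{\xi}_i = s(\xi_{i+p} - \xi_i)$, so a denominator vanishes in $\hat{\bm{\Xi}}$ if and only if it vanishes in $\bm{\Xi}$; knot multiplicities and coincidences are preserved under the affine map, so the zero-convention is applied in exactly the same places on both sides and the argument goes through unchanged.
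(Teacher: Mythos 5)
Your proof is correct and follows essentially the same route as the paper's: induction on the degree $p$ via the Cox--de Boor recurrence, with the base case handled by the order-preserving affine change of variable and the inductive step by cancellation of $s$ and $t$ in the recurrence coefficients. Your additional remark on repeated knots and the zero-denominator convention is a small point of extra care that the paper's proof omits, but it does not change the argument.
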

\begin{proof}
    We prove this lemma using the principle of mathematical induction on the degree $p$ of the B-Spline basis functions.
    
    For $p=0$, $N^{\bm{\Xi}}_{i, 0}(\xi)$ and $N^{\hat{\bm{\Xi}}}_{i, 0}(s \xi + t)$ are piecewise constant functions defined as:
    \begin{equation*}
        N^{\bm{\Xi}}_{i, 0}(\xi) = 
        \begin{cases}
        1 & \text{if } \xi_{i} \leq \xi < \xi_{i+1}, \\
        0 & \text{otherwise},
        \end{cases}
    \end{equation*}
    and
    \begin{equation*}
        N^{\hat{\bm{\Xi}}}_{i, 0}(s \xi + t) = 
        \begin{cases}
        1 & \text{if } s \xi_{i} + t \leq s \xi + t < s \xi_{i+1} + t, \\
        0 & \text{otherwise}.
        \end{cases}
    \end{equation*}
    Since $s > 0$, the conditions for non-zero values are equivalent, implying $N^{\hat{\bm{\Xi}}}_{i, 0}(s \xi + t) = N^{\bm{\Xi}}_{i, 0}(\xi)$.
    
    Assume the lemma holds for all degrees less than $p \geq 1$. Then, for degree $p$, the recurrence relation for B-Spline basis functions gives:
    \begin{equation}
    \begin{aligned}
        N^{\hat{ \bm{\Xi} }}_{i, p}(s \xi + t) &= \frac{(s \xi + t) - (s \xi_{i} + t)}{(s \xi_{i+p} + t) - (s \xi_{i} + t)} N^{\hat{ \bm{\Xi} }}_{i, p-1}(s \xi + t) + \frac{(s \xi_{i+p+1} + t) - (s \xi + t)}{(s \xi_{i+p+1} + t) - (s \xi_{i+1} + t)} N^{\hat{ \bm{\Xi} }}_{i+1, p-1}(s \xi + t) \\
        &= \frac{s (\xi - \xi_{i})}{s (\xi_{i+p} - \xi_{i})} N^{\bm{\Xi}}_{i, p-1}(\xi) + \frac{s (\xi_{i+p+1} - \xi)}{s (\xi_{i+p+1} - \xi_{i+1})} N^{ \bm{\Xi}}_{i+1, p-1}(\xi) \\
        &= N^{\bm{\Xi}}_{i, p}(\xi).
    \end{aligned}
    \end{equation}
    This completes the proof.
\end{proof}

\begin{corollary}
    Let $R^{\bm{\Xi}, \bm{\mathcal{H}}}_{i,j,p,q}(\xi, \eta)$ be the NURBS basis function defined over the knot vectors $\bm{\Xi}$ and $\bm{\mathcal{H}}$ with weights $\omega_{i,j}$, and degrees $p$ and $q$. Consider scaled and translated knot vectors $\hat{\bm{\Xi}} = s \bm{\Xi} + t$ and $\hat{\bm{\mathcal{H}}} = s' \bm{\bm{\mathcal{H}}} + t'$ with $s> 0$ and $s' > 0$. Then, $R^{\hat{\bm{\Xi}}, \hat{\bm{\mathcal{H}}}}_{i,j,p,q}(s \xi + t, s' \eta + t') = R^{\bm{\Xi}, \bm{\mathcal{H}}}_{i,j,p,q}(\xi, \eta)$.
    \label{corollary2:invariant}
\end{corollary}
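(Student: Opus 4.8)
The plan is to reduce Corollary~\ref{corollary2:invariant} to Lemma~\ref{lemma1:invariant} by exploiting the tensor-product structure of the NURBS basis function. First I would write out the definition of $R^{\bm{\Xi}, \bm{\mathcal{H}}}_{i,j,p,q}(\xi,\eta)$ explicitly as
\begin{equation*}
R^{\bm{\Xi}, \bm{\mathcal{H}}}_{i,j,p,q}(\xi, \eta) = \frac{\omega_{i,j}\, N^{\bm{\Xi}}_{i,p}(\xi)\, N^{\bm{\mathcal{H}}}_{j,q}(\eta)}{\sum_{k}\sum_{l} \omega_{k,l}\, N^{\bm{\Xi}}_{k,p}(\xi)\, N^{\bm{\mathcal{H}}}_{l,q}(\eta)},
\end{equation*}
and do the same for the transformed basis function $R^{\hat{\bm{\Xi}}, \hat{\bm{\mathcal{H}}}}_{i,j,p,q}(s\xi+t,\, s'\eta+t')$ over the scaled and translated knot vectors.

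The key step is then to apply Lemma~\ref{lemma1:invariant} separately in each parametric direction. Since $\hat{\bm{\Xi}} = s\bm{\Xi}+t$ with $s>0$, the lemma gives $N^{\hat{\bm{\Xi}}}_{k,p}(s\xi+t) = N^{\bm{\Xi}}_{k,p}(\xi)$ for every index $k$; likewise, since $\hat{\bm{\mathcal{H}}} = s'\bm{\mathcal{H}}+t'$ with $s'>0$, we get $N^{\hat{\bm{\mathcal{H}}}}_{l,q}(s'\eta+t') = N^{\bm{\mathcal{H}}}_{l,q}(\eta)$ for every $l$. Substituting these identities, every univariate factor in both the numerator and in each summand of the denominator of the transformed NURBS function is replaced by its untransformed counterpart, while the weights $\omega_{k,l}$ are unaffected by the reparameterization. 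Hence numerator equals numerator and denominator equals denominator term by term, and the quotient is unchanged, which yields the claimed equality.

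The only point requiring a little care is making sure the argument substitution is applied consistently to \emph{every} factor appearing in the denominator sum, not merely to the distinguished $(i,j)$ term in the numerator; once Lemma~\ref{lemma1:invariant} is invoked uniformly over all summation indices $k$ and $l$, there is no residual dependence on $s,t,s',t'$. I do not anticipate a genuine obstacle here: the result is an immediate consequence of the univariate lemma together with the separability of the tensor-product construction, and the proof is essentially a direct substitution rather than a fresh induction.
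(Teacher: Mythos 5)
Your proposal is correct and follows exactly the route the paper intends: the paper simply declares the corollary trivial from Lemma~\ref{lemma1:invariant}, and your write-up fills in that same argument by applying the univariate invariance factor-by-factor to the numerator and to every summand of the denominator of the tensor-product NURBS rational form. No gaps; you have merely made explicit what the paper leaves implicit.
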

\begin{proof}
    Given the invariant property of B-Spline basis functions as stated in Lemma~\ref{lemma1:invariant}, the proof of this corollary is trivial.
\end{proof}

Building upon the invariant property of B-Spline basis functions established in Lemma~\ref{lemma1:invariant}, we can extend this concept to NURBS curves. The invariance of B-Spline basis functions under scaling and translation transformations of the knot vector implies that NURBS curves, which are defined using these basis functions, will similarly maintain their geometric form under such transformations. This observation leads us to the following proposition:

\begin{proposition}
Let $\mathcal{C}(\xi)$ be a NURBS curve defined over the parameter $\xi$ within the domain of the knot vector $\bm{\Xi}$. If $\hat{\bm{\Xi}} = s \bm{\Xi} + t$ represents the scaled and translated knot vector with $s > 0$, then the NURBS curve $\hat{\mathcal{C}}(s \xi + t)$, defined over the transformed parameter domain, is geometrically identical to $\mathcal{C}(\xi)$. Formally, $\hat{\mathcal{C}}(s\xi+t) = \mathcal{C}(\xi)$, ensuring the geometric shape of the curve remains invariant under scaling and translation of its parameter domain.
\label{prop1:NURBScurveInvariance}
\end{proposition}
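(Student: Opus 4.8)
The plan is to reduce Proposition~\ref{prop1:NURBScurveInvariance} directly to the basis-function invariance already proved in Lemma~\ref{lemma1:invariant}, since a NURBS curve is nothing more than a rational combination of control points weighted by NURBS basis functions, and those basis functions are built from the B-Spline basis functions to which the lemma applies. First I would write out the two curves explicitly in terms of the \emph{same} control points $\mathbf{P}_i$ and weights $\omega_i$: the original curve is $\mathcal{C}(\xi) = \sum_{i=0}^{n} \mathbf{P}_i R^{\bm{\Xi}}_{i,p}(\xi)$, and the reparameterized curve, defined over the scaled and translated knot vector $\hat{\bm{\Xi}} = s\bm{\Xi} + t$, is $\hat{\mathcal{C}}(s\xi+t) = \sum_{i=0}^{n} \mathbf{P}_i R^{\hat{\bm{\Xi}}}_{i,p}(s\xi+t)$. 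The claim is that these agree pointwise for every admissible $\xi$, so the task is to show the rational basis functions match term by term.

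The key step is to expand each rational NURBS basis function back into its defining B-Spline form and apply Lemma~\ref{lemma1:invariant} to numerator and denominator separately. Concretely, I would write
\begin{equation*}
R^{\hat{\bm{\Xi}}}_{i,p}(s\xi+t) = \frac{\omega_i\, N^{\hat{\bm{\Xi}}}_{i,p}(s\xi+t)}{\sum_{j=0}^{n} \omega_j\, N^{\hat{\bm{\Xi}}}_{j,p}(s\xi+t)}.
\end{equation*}
Lemma~\ref{lemma1:invariant} gives $N^{\hat{\bm{\Xi}}}_{i,p}(s\xi+t) = N^{\bm{\Xi}}_{i,p}(\xi)$ for every index $i$; substituting this into both the numerator and every summand of the denominator yields exactly $R^{\bm{\Xi}}_{i,p}(\xi)$. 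Since the weights $\omega_i$ are unchanged by the reparameterization, the equality holds index-by-index, and summing against the common control points $\mathbf{P}_i$ gives $\hat{\mathcal{C}}(s\xi+t) = \mathcal{C}(\xi)$. This is essentially the one-dimensional specialization of Corollary~\ref{corollary2:invariant}, so I could alternatively cite the corollary directly, but stating the rational expansion makes the argument self-contained.

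I expect no genuine obstacle here; the proposition is a corollary of the lemma, and the author's own phrasing (``the proof of this corollary is trivial'' for the two-variable version) signals that the intended proof is a short substitution argument. The only point requiring a modicum of care is bookkeeping about the domain: the knot transformation $\xi \mapsto s\xi+t$ maps the parameter interval $[\xi_0,\xi_{n+p+1}]$ bijectively onto $[s\xi_0+t,\, s\xi_{n+p+1}+t]$ because $s>0$, so the reparameterized curve traverses exactly the same geometric locus, only at a different parameter speed. I would close by emphasizing that geometric identity means identical point sets (and identical curve up to reparameterization), not identical parameter speed — which is precisely the property that makes this reparameterization safe for the boundary-matching scheme, since it lets us adjust the parametrization of $\mathcal{C}^{\mathrm{East}}$ without perturbing its shape.
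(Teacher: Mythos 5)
Your proposal is correct and follows exactly the route the paper intends: the paper states Proposition~\ref{prop1:NURBScurveInvariance} as an immediate consequence of Lemma~\ref{lemma1:invariant} (via the same substitution of $N^{\hat{\bm{\Xi}}}_{i,p}(s\xi+t) = N^{\bm{\Xi}}_{i,p}(\xi)$ into the rational numerator and denominator) and does not even write out a separate proof. Your explicit term-by-term expansion and the remark about the bijective parameter-domain map for $s>0$ simply make the paper's implicit argument self-contained.
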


Based on the fundamental principle in Proposition \ref{prop1:NURBScurveInvariance} that the geometric integrity of NURBS curves is preserved under scaling and translation transformations of their parameter domain, we develop a novel reparameterization method specifically tailored for addressing boundary parameter correspondence challenges.

\begin{figure}[H]
    \centering
    \subfigure[Schematic diagram of curve reparameterization]{\includegraphics[width=0.4\linewidth]{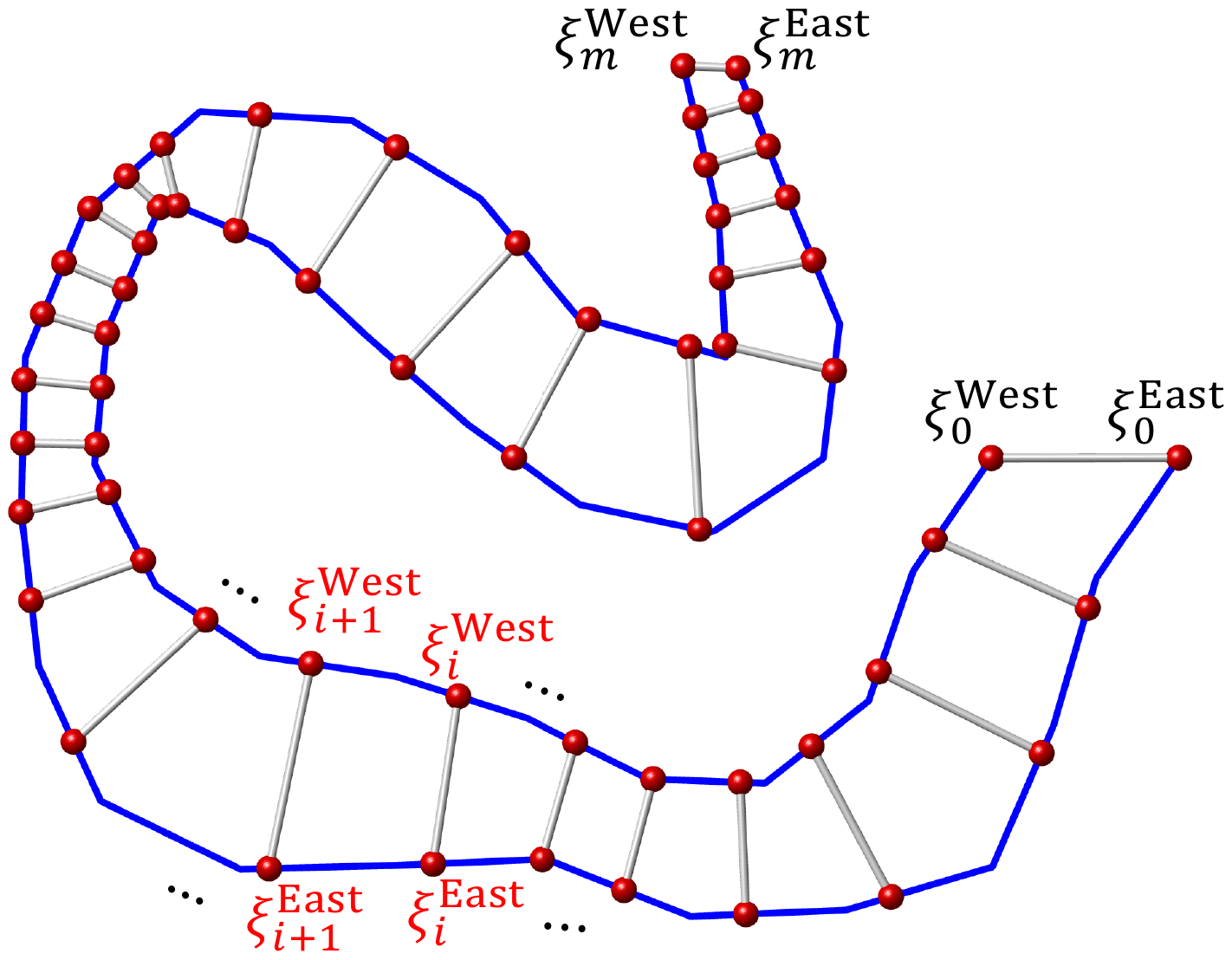}
    \label{fig4a:curveReparam}}
    \quad 
    \subfigure[Parameterization after curve reparameterization]{
    \includegraphics[width=0.4\linewidth]{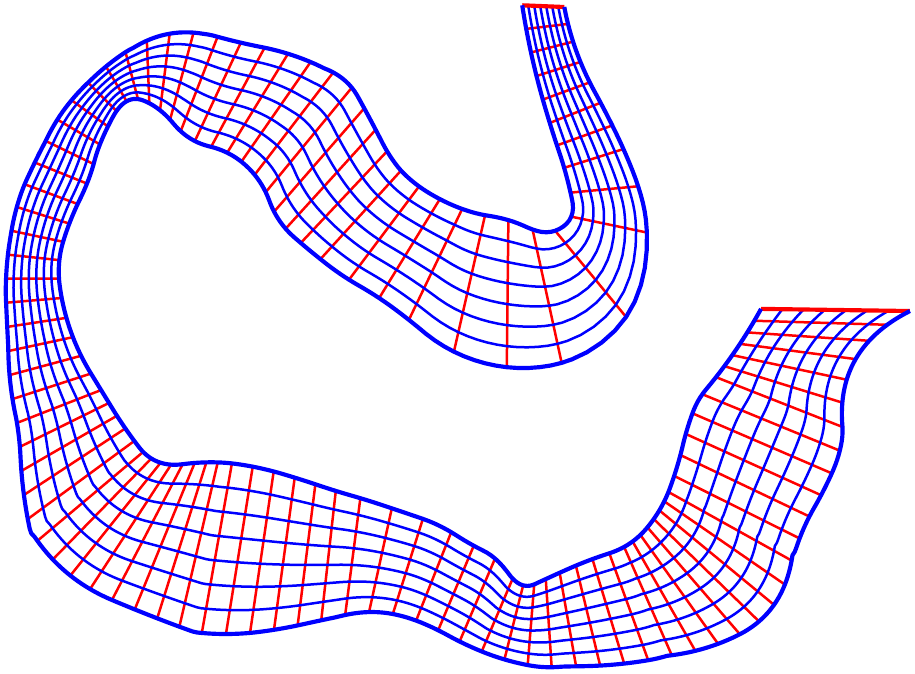}
    \label{fig4b:resultParam}}
    \caption{Schematic diagram for curve reparameterization and the resulting parameterization after curve parameterization.}
    \label{fig4:curveReparam}
\end{figure}

For each pair of the markers, as shown in Figure~\ref{fig4a:curveReparam}, we determine the parameters $\xi_i^{\rm East}$ and $\xi_i^{\rm West}$, corresponding to the closest points on the East and West boundary curves, respectively. This is achieved by resorting the nonlinear equation:
\begin{equation}
    \left( \mathcal{C}^{*}(\xi) - \mathbf{P}^{*}_i \right) \cdot \mathcal{C}^{', *}(\xi) = 0,
    \label{eq4301:closestPt}
\end{equation}
where $\mathcal{C}^{', *}(\xi) = \partial \mathcal{C}^{*}(\xi) / \partial \xi$ represents the first derivative of the curve $\mathcal{C}^{*}(\xi)$, $\mathbf{P}^{*}_i$ is the $i$-th marker on the boundary curve $\mathcal{C}^{*}(\xi)$. In our implementation, the standard Newton's method is employed to solve equation \eqref{eq4301:closestPt} efficiently. 

Upon identifying these parameters, we then focus on the East boundary curve $\mathcal{C}^{\rm East}$. We split this curve at the parameter $\xi_{i+1}^{\rm East}$. This is achieved by inserting the value of $\xi_{i+1}^{\rm East}$ until its multiplicity is equal to the degree of the curve. Consequently, we obtain a curve segment defined over the parameter interval $[\xi_{i}^{\rm East}, \xi_{i+1}^{\rm East}]$. The next step is to align this segment with the West boundary curve $\mathcal{C}^{\rm West}$. To accomplish this, we apply an affine transformation to the curve segment. This transformation adjusts the parameter interval of the East curve segment to match the parameter interval $[\xi_{i}^{\rm West}, \xi_{i+1}^{\rm West}]$. The result is a harmonized alignment between the East and West boundary curves, crucial for maintaining geometric consistency in the reparameterization process.

This procedure enables the reparameterization of the curve while maintaining the geometric integrity. By applying linear interpolation-based parameterization method along the $\eta$-direction, we achieve a refined parameterization. This improvement is demonstrated in Figure~\ref{fig4b:resultParam}, highlighting the significant parameterization quality improvement achieved through our reparameterization procedure.

\subsection{Analysis-suitable paramterization}
\label{sec404:ASParam}

As demonstrated in Figure~\ref{fig5a:linearInterp}, our method for reparameterizing boundary curves, when compared with the original boundary curves shown in Figure~\ref{fig1a:chordLength}, proves to be highly effective. Even a straightforward linear interpolation between corresponding boundary control points results in a markedly improved parameterization. In this section, we explore the possibility of further enhancing the parameterization quality by integrating our previously developed PDE-based method \cite{ji2023improved}.

It is important to highlight that, in this case, we are constrained by the absence of additional degrees of freedom to refine the parameterization quality. To address this, we implement $k$-refinements along the $\eta$-direction, thereby introducing the necessary degrees of freedom. Henceforth in this paper, we will default to elevating the degree along $\eta$-direction to $2$ and inserting $2$ extra knots to the knot vector $\bm{\Xi}$, except where explicitly stated otherwise. This $k$-refinement procedure results in the formation of the knot vector $\bm{\mathcal{H}} = \{ 0, 0, 0, 1/3, 2/3, 1, 1, 1 \}$.

The core step of the PDE-based parameterization method lies in solving a nonlinear elliptic PDE system:
\begin{equation}
\left \{ 
\begin{array}{cc}
\nabla \cdot [\mathbb{A} \nabla \xi(x)] &= 0\\
\nabla \cdot [\mathbb{A} \nabla \eta(x)] &= 0 
\end{array}
\quad \text{s.t.}\ \bm{x}^{-1}|_{\partial \Omega} = \partial \hat{\Omega}.
\right.
\label{eq4401:quasiHaramonicEqs}
\end{equation}
where the tensor field $\mathbb{A}$ is set to $\texttt{DIAG}(1/\left \vert \bm{\mathcal{J}} \right \vert, 1/\left \vert \bm{\mathcal{J}} \right \vert)$ in \cite{ji2023improved}.


\begin{figure}[H]
\centering
\subfigure[Linear interpolation-based parameterization]{\includegraphics[width=0.5\linewidth]{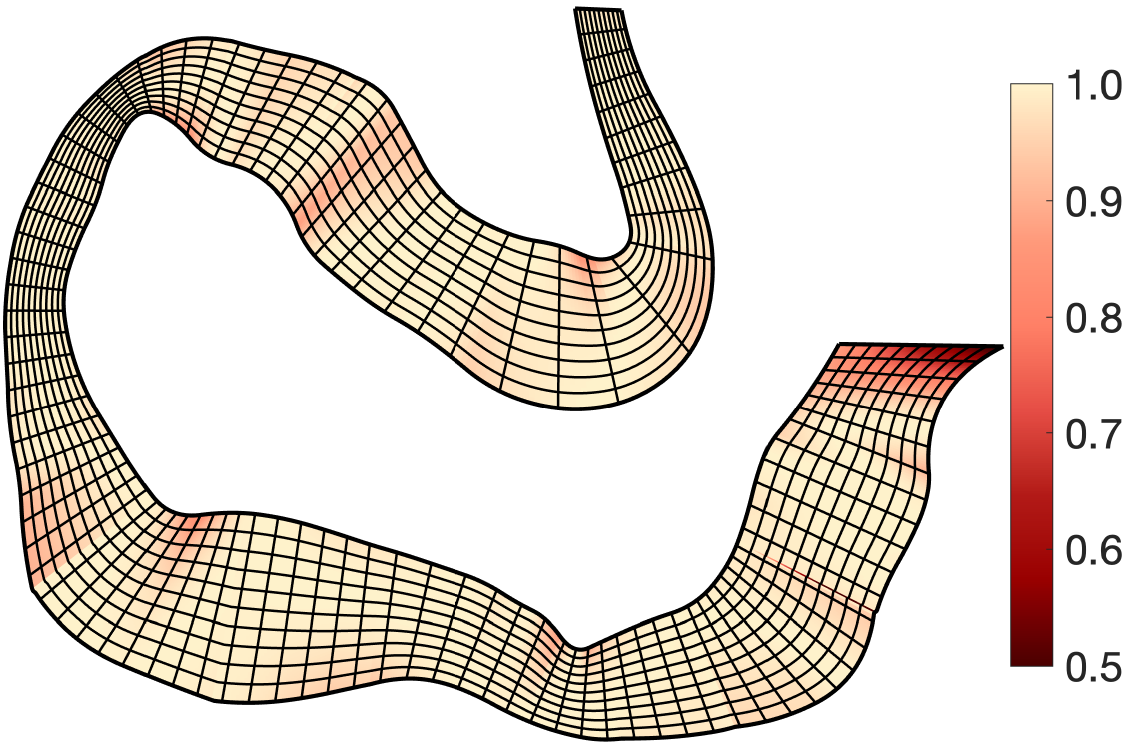}
\label{fig5a:linearInterp}}
\quad
\subfigure[Improved parameterization via PDE-Based approach]{
\includegraphics[width=0.45\linewidth]{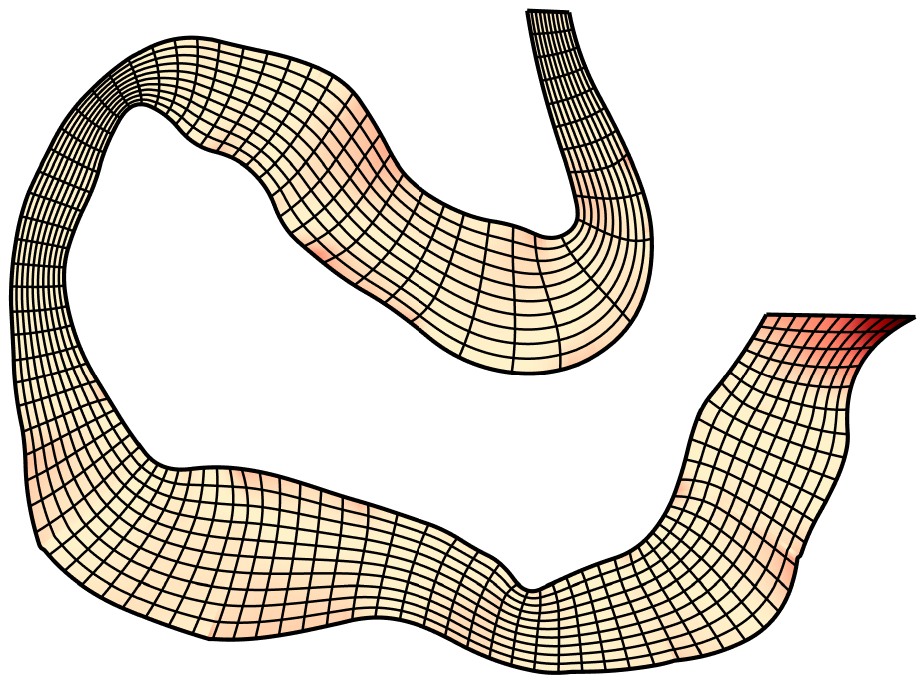}
\label{fig5b:PDEApproach}}
\caption{Comparative visualization of scaled Jacobian $\vert \bm{\mathcal{J}} \vert_s$: (a) Parameterization via linear interpolation; (b) Improved parameterization employing PDE-Based approach.}
\label{fig5:paramResult}
\end{figure}

Denote by $\mathbb{S}_{p,q}^{\bm{\Xi}, \bm{\mathcal{H}}}$ the spline space spanned by the bivariate B-spline bases of degree $p$ and $q$ with knot vectors $\bm{\Xi}$ and $\bm{\mathcal{H}}$, respectively, and $(\mathbb{S}_{p,q}^{\bm{\Xi}, \bm{\mathcal{H}}})^0 = \{N_i \in \mathbb{S}_{p,q}^{\bm{\Xi}, \bm{\mathcal{H}}}: N_i \vert _{\partial \hat{\Omega}} = 0 \}$ be those vanishing on $\partial \Omega$. The variational principle deduces the nonlinear PDE system \eqref{eq4401:quasiHaramonicEqs} to its equivalent discretization scheme:

\begin{equation}
\forall N_i \in (\mathbb{S}_{p,q}^{\bm{\Xi}, \bm{\mathcal{H}}})^0: 
\left \{
\begin{array}{cc}
     \int_{\Omega}\ \nabla \mathbf{N} \cdot \mathbb{A} \nabla \xi (x)\ \rm{d} \Omega = \mathbf{0}, &\\
     \int_{\Omega}\ \nabla \mathbf{N} \cdot \mathbb{A} \nabla \eta (x)\ \rm{d} \Omega = \mathbf{0}, &
\end{array}
\quad
\text{s.t.}\ \bm{x}|_{\partial \hat{\Omega}} = \partial \Omega,
\right.
\label{eq4402:nonlinearSystem}
\end{equation}
where $\mathbf{N}$ denotes the collection of B-spline basis functions in the spline space $(\mathbb{S}_{p,q}^{\bm{\xi}, \bm{\mathcal{H}}})^0$. The enhanced parameterization, depicted in Figure~\ref{fig5b:PDEApproach}, demonstrates a significant improvement in parameterization quality. This advancement underscores the effectiveness of the proposed approach in refining the parameterization process. Our implementation is available now in the open-source \texttt{G+Smo} library \cite{juttler2014geometry}, offering a robust solution framework for the analysis-suitable parameterization problem in isogeometric analysis.

\section{Numerical experiments and comparisons}
\label{sec5:experiments}

In this section, we present a numerical investigation to demonstrate the effectiveness of our proposed method for boundary parameter matching problem.

\subsection{Implementation details and parameterization quality metrics}
\label{sec501:implementation}

The method described in this paper is implemented using C++ programming language. The computational experiments were conducted on a MacBook Pro 14-inch 2021 equipped with an Apple M1 Pro CPU and 16 GB of RAM. We utilize \texttt{G+Smo} (Geometry + Simulation Modules), an open-source C++ library, as the foundation of our implementation, leveraging its extensive functionalities in geometric computing and simulation \cite{juttler2014geometry, mantzaflaris2019overview}. For handling matrix and vector operations, as well as solving the linear systems integral to our method, Eigen library \cite{eigenweb} is integrated.

Two quality metrics are employed to assess the parameterization: the scaled Jacobian, denoted as $\vert \bm{\mathcal{J}} \vert_s$, which evaluates orthogonality, and the uniformity metric, denoted as $m_{\rm unif.}$, which measures uniformity.

\begin{itemize}
    \item The scaled Jacobian is defined as follows:
    \begin{equation}
    \vert \bm{\mathcal{J}} \vert_s = \frac{\vert \bm{\mathcal{J}} \vert}{\vert \bm{x}_{\xi} \vert \vert \bm{x}_{\eta} \vert}.
    \end{equation}
    This metric takes values ranging from $-1.0$ to $1.0$. A negative value of $\vert \bm{\mathcal{J}} \vert_s$ indicates a fold in the parameterization $\bm{x}$, signifying overlaps in the mapping. The ideal value of $\vert \bm{\mathcal{J}} \vert_s$, which is $1.0$, is achieved when the orthogonality in the parameterization is maintained optimally.
    \item The uniformity metric is computed as:
    \begin{equation}
    m_{\rm unif.} = \left \vert \frac{\vert \mathcal{J} \vert}{R_{\rm area}} - 1 \right \vert ,
    \end{equation}
    where $R_{\rm area} = {\text{Area}(\Omega)}/{\text{Area}(\hat{\Omega})}$ represents the area ratio between the physical domain $\Omega$ and the parametric domain $\hat{\Omega}$. This metric $m_{\rm unif.}$ attains its optimal value of $0.0$ when the parameterization uniformly conserves area.
\end{itemize}

For a comprehensive evaluation, these metrics were calculated over a dense grid of $1001 \times 1001$ sample points, including the domain boundaries. In our analysis, we exclude the maximum values of $\vert \bm{\mathcal{J}} \vert_s$ and the minimum values of $m_{\rm unif.}$ from the statistics, as these are typically achievable in all the examples.

\subsection{Role of boundary correspondence}
\label{sec502:comparison}

This section is devoted to conducting a comparative quality analysis of the parameterizations derived from various methods.

Initially, as depicted in Figure~\ref{fig6a:Scheldt_inputSJ}, the boundary curves are parameterized using an approximate chord-length method tailored to their individual representations. However, this method faces challenges in creating analysis-suitable parameterizations, mainly due to a mismatch in parameter speeds that degrades the parameterization quality. This degradation becomes particularly noticeable in the resulting poor parameterization.

\begin{figure}[H]
    \centering
    \subfigure[Scaled Jacobian $\vert \bm{\mathcal{J}} \vert_s$ (left) and uniformity metric $m_{\rm unif.}$ (right) for initial chord-length parameterization]{
    \includegraphics[width=0.45\linewidth]{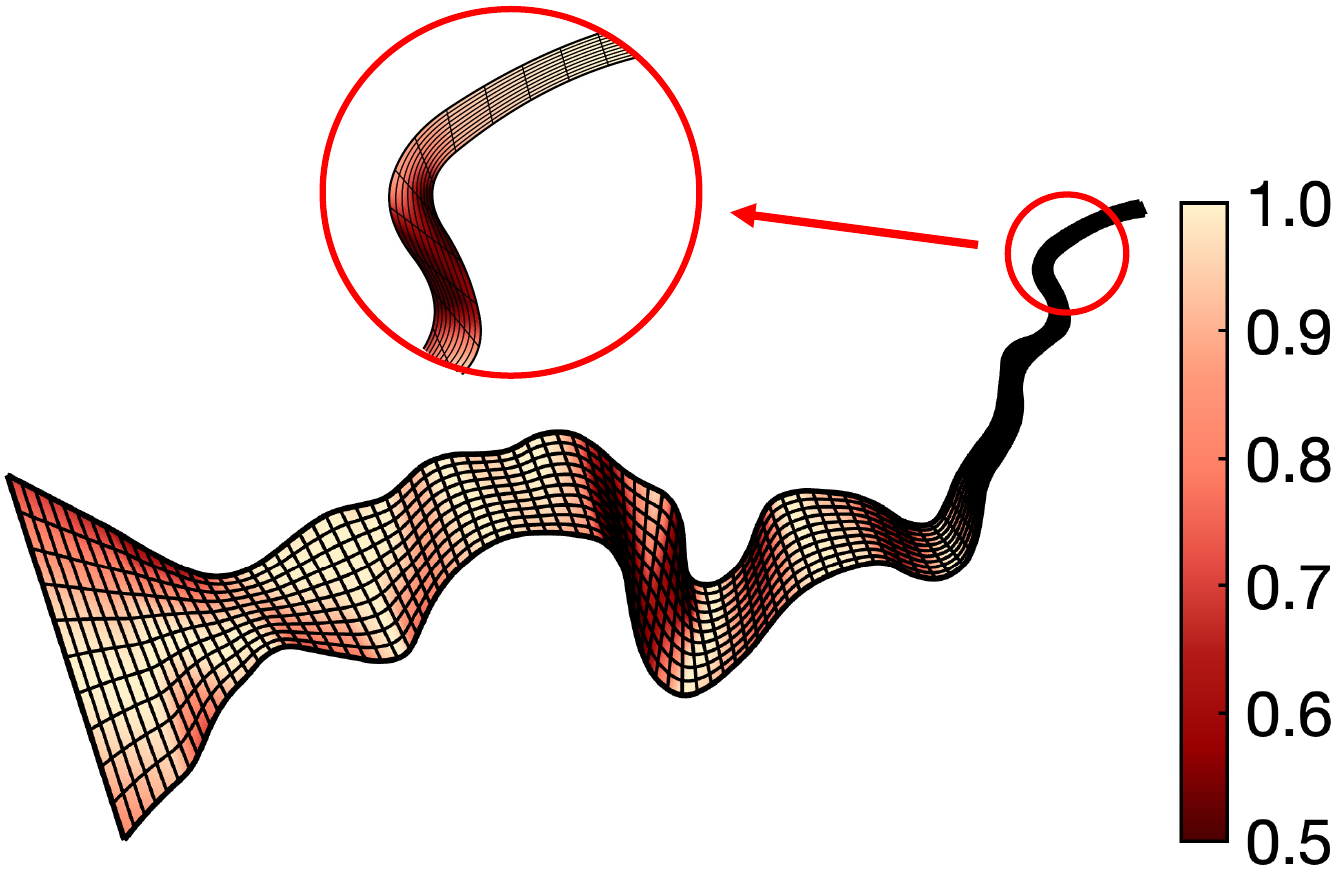}
    \quad 
    \includegraphics[width=0.45\linewidth]{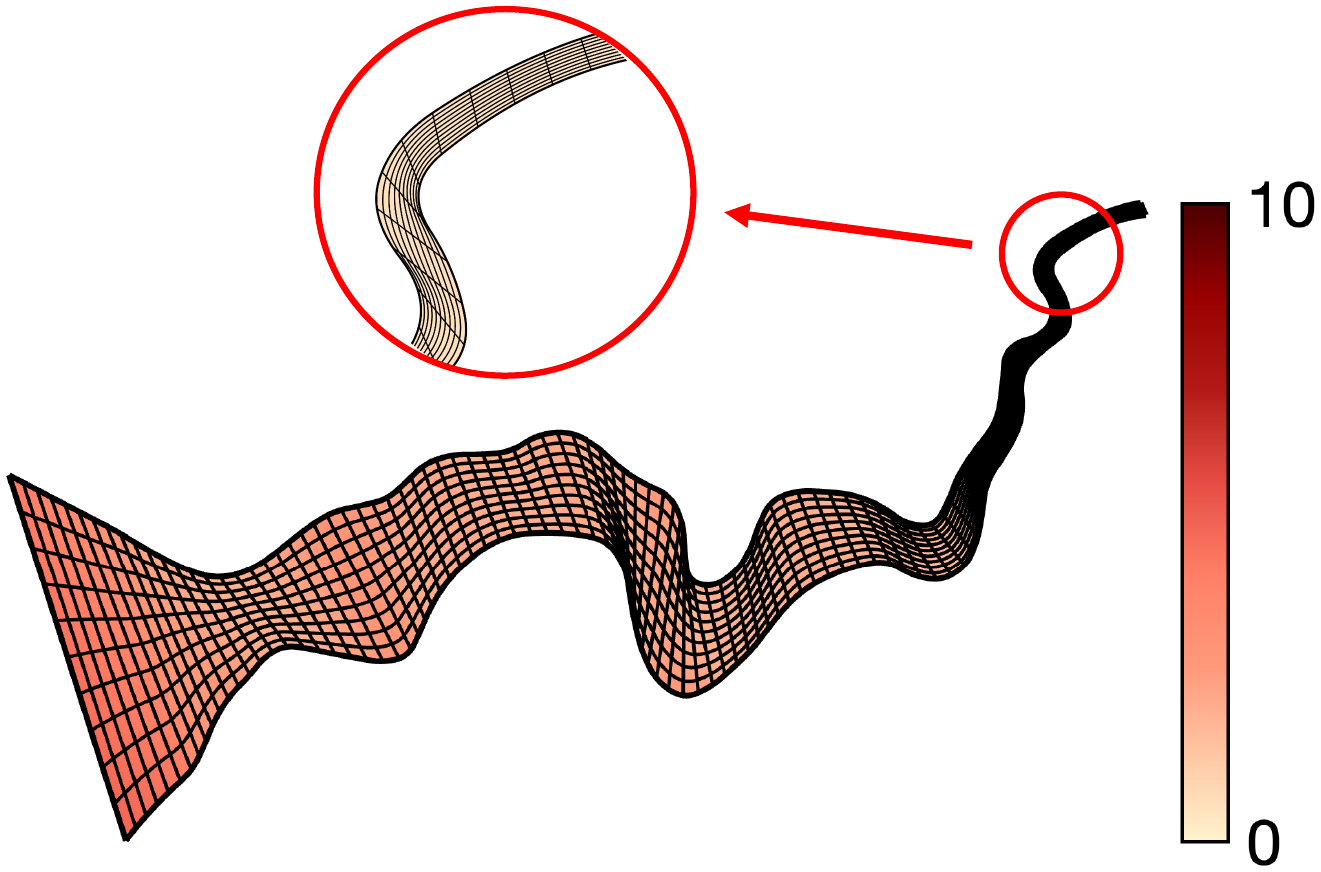}
    \label{fig6a:Scheldt_inputSJ}}
    \subfigure[Scaled Jacobian $\vert \bm{\mathcal{J}} \vert_s$ (left) and uniformity metric $m_{\rm unif.}$ (right) for linear interpolation-based parameterization]{
    \includegraphics[width=0.45\linewidth]{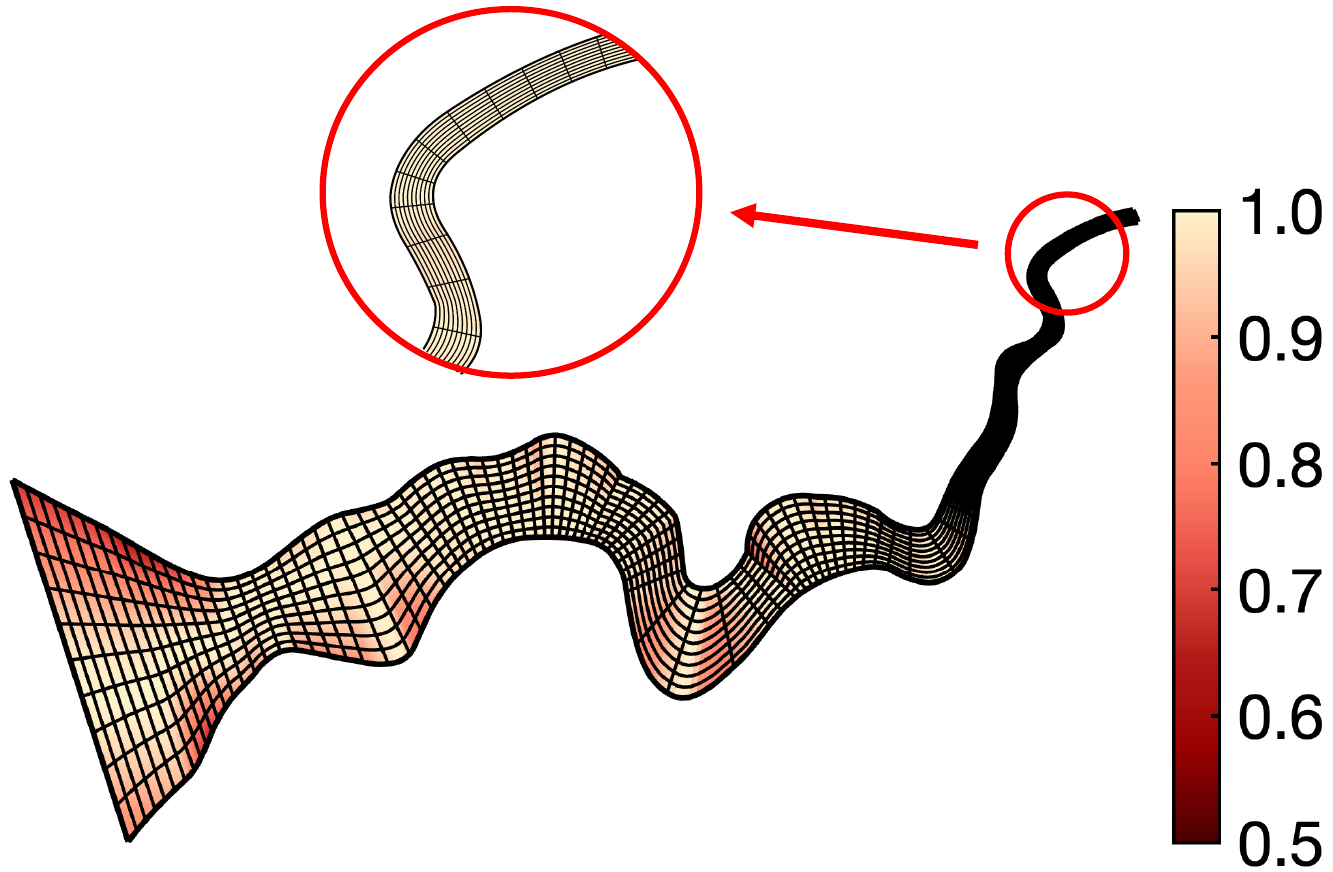}
    \quad 
    \includegraphics[width=0.45\linewidth]{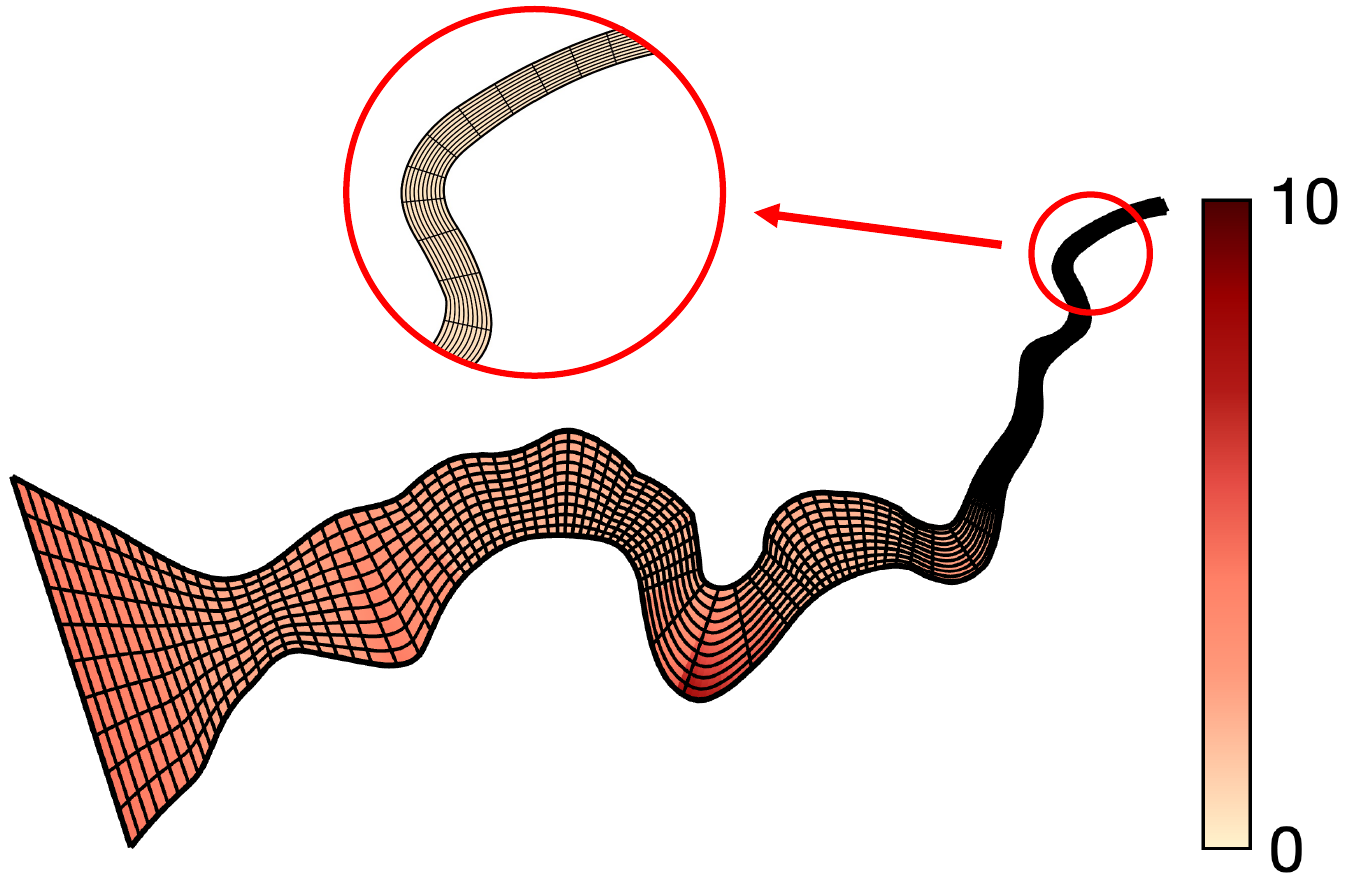}
    \label{fig6b:Scheldt_linearSJ}}
    \subfigure[Scaled Jacobian $\vert \bm{\mathcal{J}} \vert_s$ (left) and uniformity metric $m_{\rm unif.}$ (right) for PDE-based parameterization approach]{
    \includegraphics[width=0.45\linewidth]{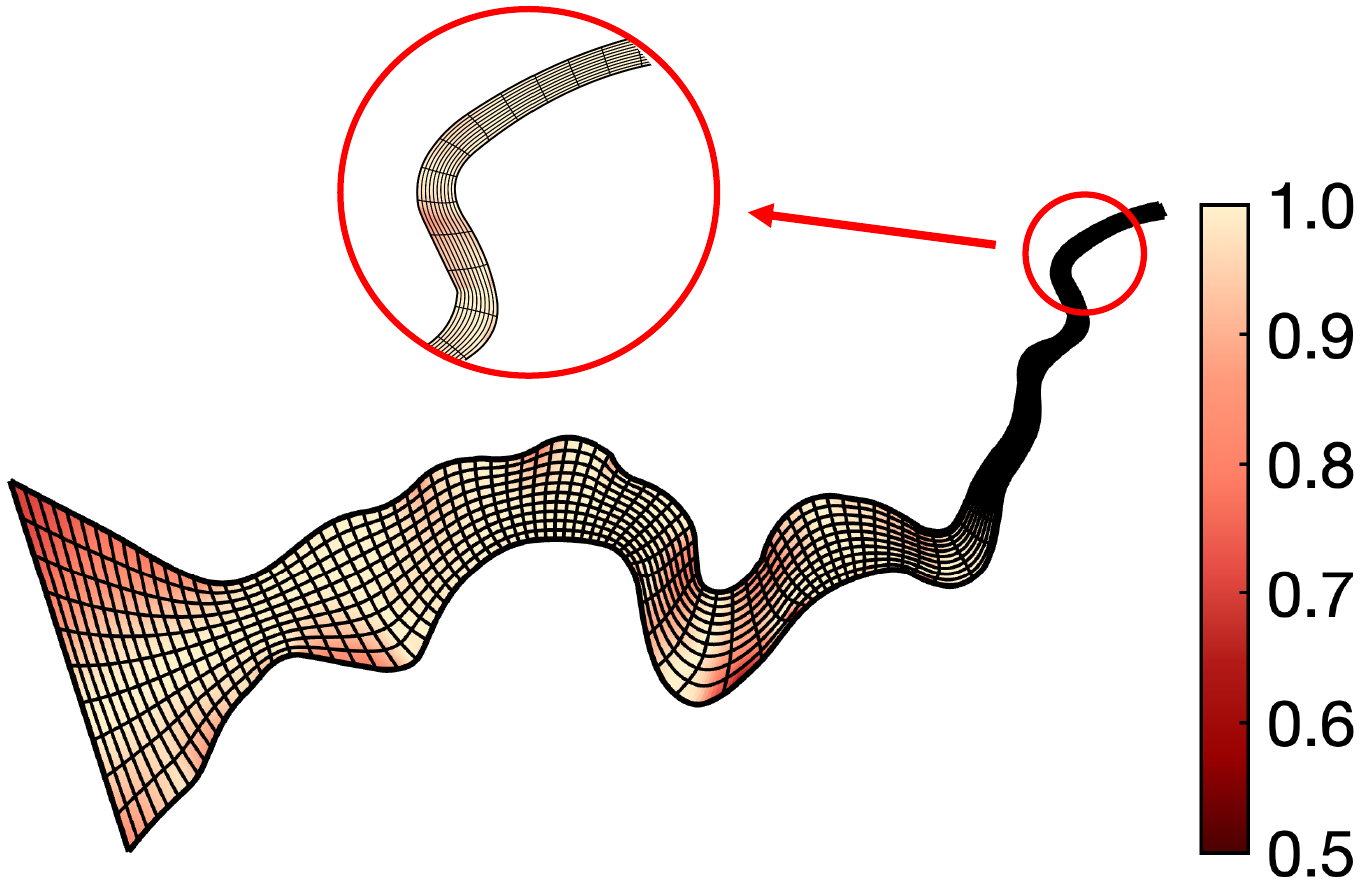}
    \quad
    \includegraphics[width=0.45\linewidth]{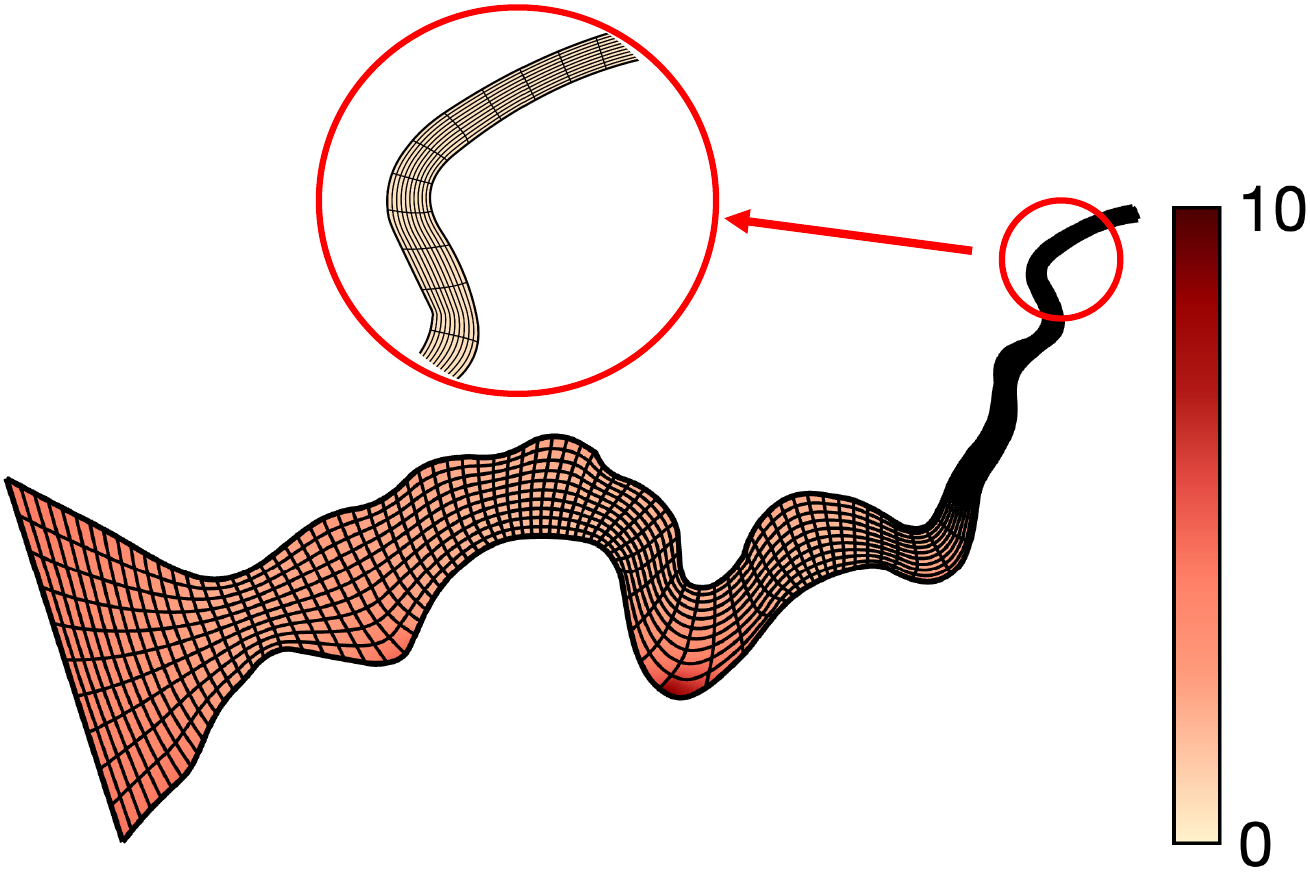}
    \label{fig6c:Scheldt_PDESJ}}
    \caption{Comparative analysis of parameterization techniques for the Scheldt estuary: Scaled Jacobian $\vert \bm{\mathcal{J}} \vert_s$ and uniformity metrics $m_{\rm unif.}$ across different methods.}
\label{fig6:Scheldt}
\end{figure}

Our proposed boundary parameter matching method addresses these issues effectively, enhancing the overall quality of parameterization. This improvement is achieved through a simple linear interpolation along the $\eta$-direction, as illustrated in Figure~\ref{fig6b:Scheldt_linearSJ}, leading to considerable enhancements. With the linear interpolation-based parameterization as an initial guess, further refinement is obtained by implementing our PDE-based parameterization technique, which results in a superior quality of parameterization, as shown in Figure~\ref{fig6c:Scheldt_PDESJ}. The effectiveness of these improvements is substantiated by comprehensive statistical data, detailed in Table~\ref{table1:data}. In this case, the initial application of the chord-length method results in an invalid parameterization, indicated by a negative minimum value of the scaled Jacobian $\vert \bm{\mathcal{J}} \vert_s$. This suggests that the parameterization is non-bijective. We omit the additional quality metrics for the non-bijective parameterization results.

\begin{table}[p]
  \caption{Comparisons between different methods: chord-length method, linear interpolation-based parameterization method, and PDE-based parameterization method. This table presents the minimum and average values of the scaled Jacobian $\vert \bm{\mathcal{J}} \vert_s$, along with the maximum and average values of the uniformity metric $m_{\rm unif.}$. Additionally, for the PDE-based method, which utilizes linear interpolation as an initial guess, the computational timings (in seconds) are also provided, marked with a plus (+). Results demonstrating the best performance are highlighted in \textbf{bold} for easy identification.}
  \centering
  \resizebox{\linewidth}{!}{
  \begin{tabular}{lllllll}
  \toprule
  \multirow{2}{*}{Model} & \multirow{2}{*}{Method} & \multicolumn{2}{c}{$\vert \bm{\mathcal{J}} \vert_s$} & \multicolumn{2}{c}{$m_{\rm unif.}$} & \multirow{2}{*}{Time (s)}\\
  \cline{3-4}
  \cline{5-6}
  & &min.&avg.&max.&avg.& \\
  \hline
  \multirow{3}{*}{Gulf of Mexico (Figure~\ref{fig1:boundary_correspondence_comparison})} & Chord-length method & $-1$ & - & - & - & - \\
  & Pan et al. \cite{pan2018low} & $-1$ & - & - & - & - \\
  & Zheng et al. \cite{zheng2019boundary} & $-1$ & - & - & - & - \\
  & Zhan et al. \cite{zhan2023boundary} & $-1$ & - & - & - & - \\
  & Zhan et al. \cite{zhan2024simultaneous} & $-1$ & - & - & - & - \\
  &Linear interpolation& $0.3035$ & $0.9802$ & $\mathbf{5.9168}$ & $2.0606$ & $0.6565$ \\
  &PDE-based method& $\mathbf{0.4577}$ & $\mathbf{0.9843}$ & $9.6376$ & $\mathbf{2.0589}$ & $+0.0378$ \\
  \hline
  \multirow{3}{*}{Scheldt estuary (Figure~\ref{fig6:Scheldt}) } & Chord-length method & $-0.0053$ & - & - & - & - \\
  & Pan et al. \cite{pan2018low} & $-1$ & - & - & - & - \\
  & Zheng et al. \cite{zheng2019boundary} & $-1$ & - & - & - & - \\
  & Zhan et al. \cite{zhan2023boundary} & $-1$ & - & - & - & - \\
  & Zhan et al. \cite{zhan2024simultaneous} & $-1$ & - & - & - & - \\
  &Linear interpolation& $\mathbf{0.6410}$ & $\mathbf{0.96821}$ & $\mathbf{8.0141}$ & $\mathbf{2.1636}$ & $1.2570$ \\
  &PDE-based method& $0.3727$ & $0.9655$ & $9.4963$ & $2.1373$ & $+0.0617$\\
  \hline
  \multirow{2}{*}{Letter ``M'' (Figure~\ref{fig7:GMP})} &Chord-length method& $0.2822$ & $0.8587$ & $\mathbf{2.4644}$ & $\mathbf{2.0101}$ & - \\
  & Pan et al. \cite{pan2018low} & $-1$ & - & - & - & - \\
  & Zheng et al. \cite{zheng2019boundary} & $-1$ & - & - & - & - \\
  & Zhan et al. \cite{zhan2023boundary} & $-1$ & - & - & - & - \\
  & Zhan et al. \cite{zhan2024simultaneous} & $-1$ & - & - & - & - \\
  & Linear interpolation & $\mathbf{0.6969}$ & $\mathbf{0.9955}$ & $11.8670$ & $2.0501$ & $0.4699$ \\
  \hline
  \multirow{2}{*}{Letter ``G'' (Figure~\ref{fig7:letterG})} & Chord-length method & $0.43848$ & $0.8866$ & $\mathbf{2.8831}$ & $\mathbf{2.0186}$ & -\\
  & Pan et al. \cite{pan2018low} & $-1$ & - & - & - & - \\
  & Zheng et al. \cite{zheng2019boundary} & $-1$ & - & - & - & - \\
  & Zhan et al. \cite{zhan2023boundary} & $-1$ & - & - & - & - \\
  & Zhan et al. \cite{zhan2024simultaneous} & $-1$ & - & - & - & - \\
  &Linear interpolation& $\mathbf{0.69859}$ & $\mathbf{0.9873}$ & $3.3872$ & $2.0192$ & $0.4144$ \\
  \hline
  \multirow{2}{*}{Letter ``P'' (Figure~\ref{fig8:letterP})} &Chord-length method& $-0.3541$ & - & - & - & - \\
  & Pan et al. \cite{pan2018low} & $-1$ & - & - & - & - \\
  & Zheng et al. \cite{zheng2019boundary} & $-1$ & - & - & - & - \\
  & Zhan et al. \cite{zhan2023boundary} & $-1$ & - & - & - & - \\
  & Zhan et al. \cite{zhan2024simultaneous} & $-1$ & - & - & - & - \\
  & Linear interpolation & $\mathbf{0.91606}$ & $\mathbf{0.9929}$ & $\mathbf{3.9123}$ & $\mathbf{2.0424}$ & $0.3922$ \\
  \hline
  \multirow{2}{*}{Channel (Figure~\ref{fig8:Channel})} & Chord-length method & $0.3856$ & $0.7828$ & $2.4072$ & $2.0217$ & -\\
  & Pan et al. \cite{pan2018low} & $-1$ & - & - & - & - \\
  & Zheng et al. \cite{zheng2019boundary} & $-1$ & - & - & - & - \\
  & Zhan et al. \cite{zhan2023boundary} & $-1$ & - & - & - & - \\
  & Zhan et al. \cite{zhan2024simultaneous} & $-1$ & - & - & - & - \\
  &Linear interpolation& $\mathbf{0.66439}$ & $\mathbf{0.9838}$ & $\mathbf{7.3417}$ & $\mathbf{2.0666}$ & $0.4285$ \\
\bottomrule
\end{tabular}}
\label{table1:data}
\end{table}

\begin{figure}[H]
    \centering
    \subfigure[Initial chord-length parameterization]{\includegraphics[width=0.42\linewidth]{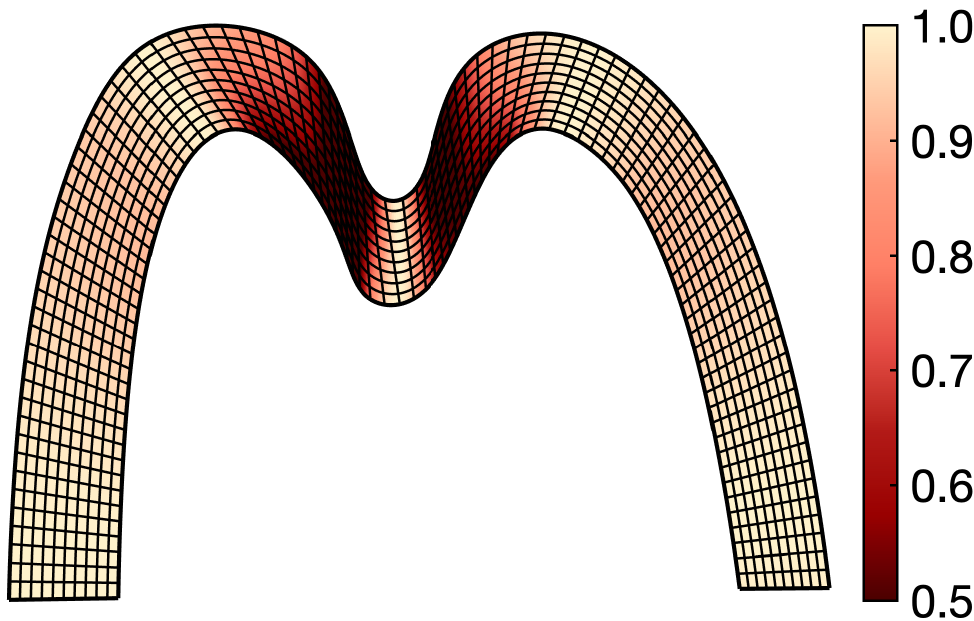}
    \label{subfig:initial}
    } \quad
    \subfigure[Linear interpolation-based parameterization with the proposed boundary parameter matching method]{
    \includegraphics[width=0.42\linewidth]{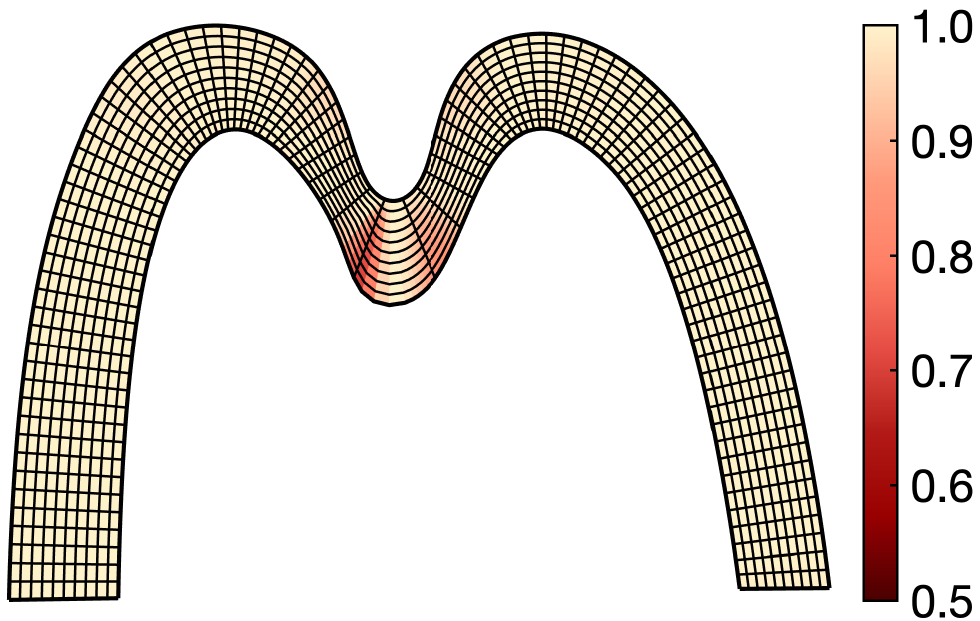}
    \label{subfig:linear}
    }
    \caption{Scaled Jacobian $\vert \bm{\mathcal{J}} \vert_s$ for a letter ``M''-shaped domain.}
    \label{fig7:GMP}
\end{figure}

Building on the previous discussion, the effectiveness of our proposed method is further exemplified with additional ``M''-shaped domain, as shown in Figure~\ref{fig7:GMP}. This example is instrumental in highlighting the substantial improvement in parameterization quality achieved by our approach. Remarkably, even a straightforward linear interpolation technique, when applied in conjunction with our boundary parameter approach, results in parameterizations of surprisingly high quality. In instances like these, resorting to a PDE-based parameterization method to further improve quality becomes superfluous. This outcome demonstrates not only the robustness of our method but also its capability and robustness in handling complex geometries.

\subsection{Comparisons with existing methods}
\label{sec503:comparison}

This section presents a comparative analysis of our proposed method against established techniques, specifically the Low-rank parameterization method introduced by Pan et al. \cite{pan2018low}, the boundary correspondence method utilizing optimal mass transport as proposed by Zheng et al. \cite{zheng2019boundary}, the boundary corners selection method based on deep learning by Zhan et al. \cite{zhan2023boundary}, and the simultaneous boundary and interior parameterization method via deep learning proposed by Zhan et al.\cite{zhan2024simultaneous}.

For our comparative analysis, we utilize the open-source implementation of the boundary correspondence method provided by the original authors \cite{zheng2019boundary}, which is accessible at \url{https://github.com/ZH-ye/BoundaryCorrespondenceOT}. Within the workflow of this method, the authors first identify optimal corner points that align the curvature measure of the physical domain $\Omega$ with that of the unit square. Subsequently, a spline-based least-squares fitting subroutine in conjunction with chord-length parameterization is utilized to approximate the input point cloud, which is followed by the integration of the low-rank quasi-conformal mapping parameterization method \cite{pan2018low} to complete. While the last two competitors by Zhan et al. from \cite{zhan2023boundary, zhan2024simultaneous}, the parameterizations are kindly provided by the original authors.

\begin{figure}[H]
    \centering
    \subfigure[Zheng et al. \cite{zheng2019boundary}]{
    \includegraphics[width=0.3\linewidth]{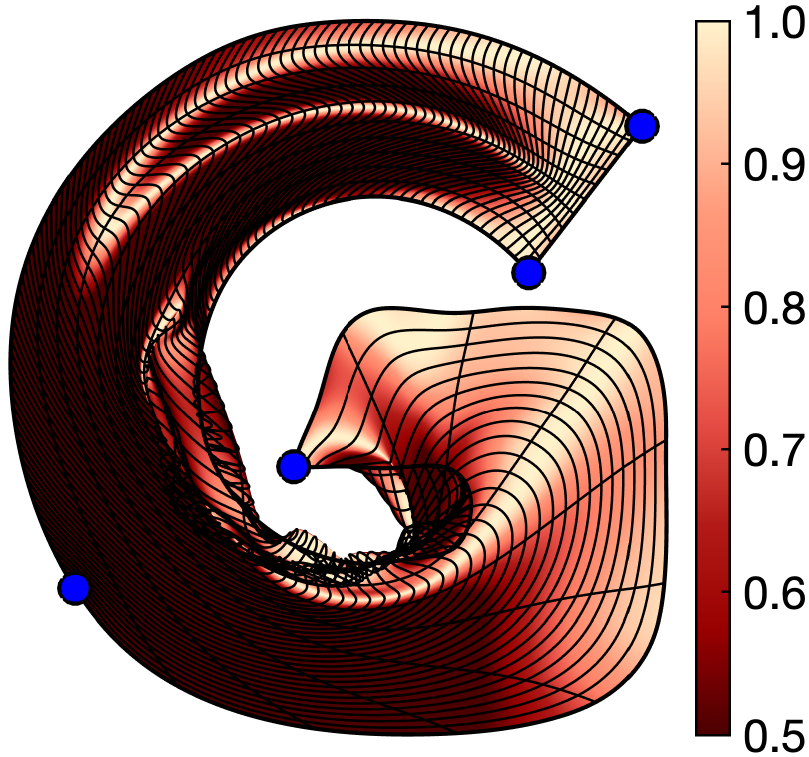}
    \label{fig7a:zheng}}\quad
    \subfigure[Pan et al. \cite{pan2018low}]{ \includegraphics[width=0.3\linewidth]{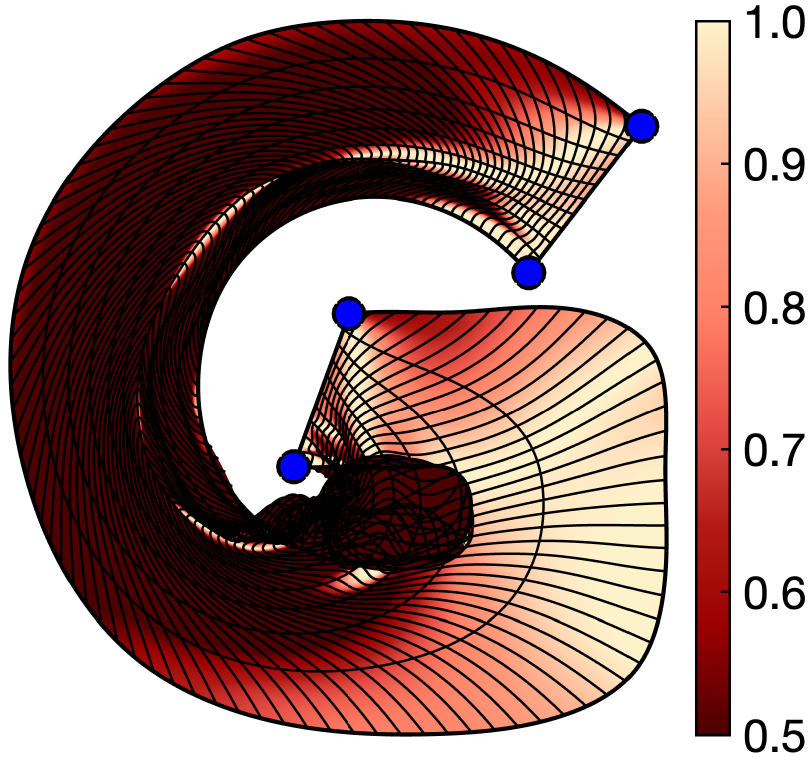}
    \label{fig7b:pan}}\quad
    \subfigure[Zhan et al. \cite{zhan2023boundary}]{
    \includegraphics[width=0.3\linewidth]{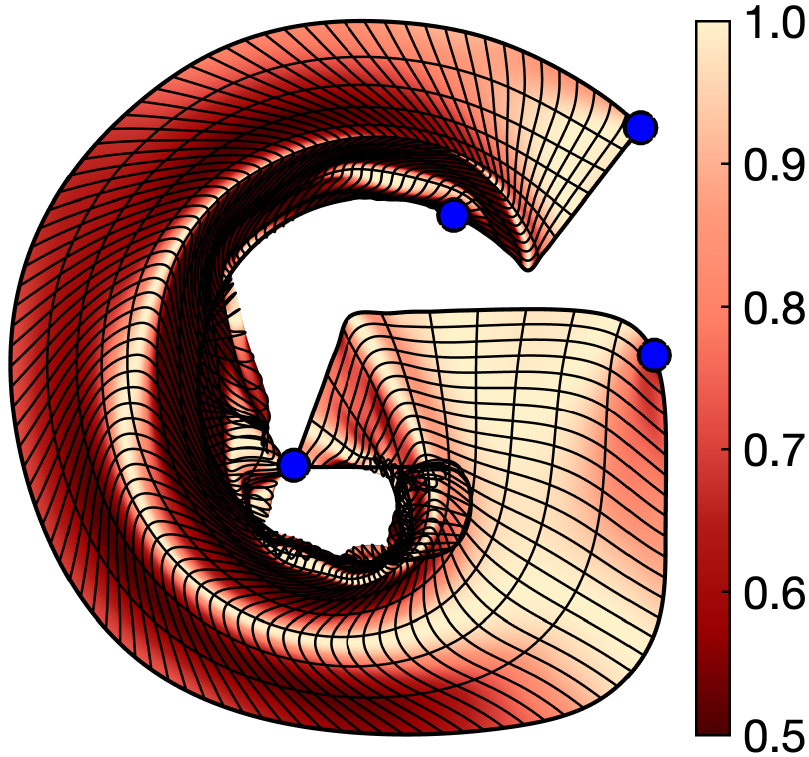}
    \label{fig8c:zhan2023}}\\
    \subfigure[Zhan et al. \cite{zhan2024simultaneous}]{
    \includegraphics[width=0.3\linewidth]{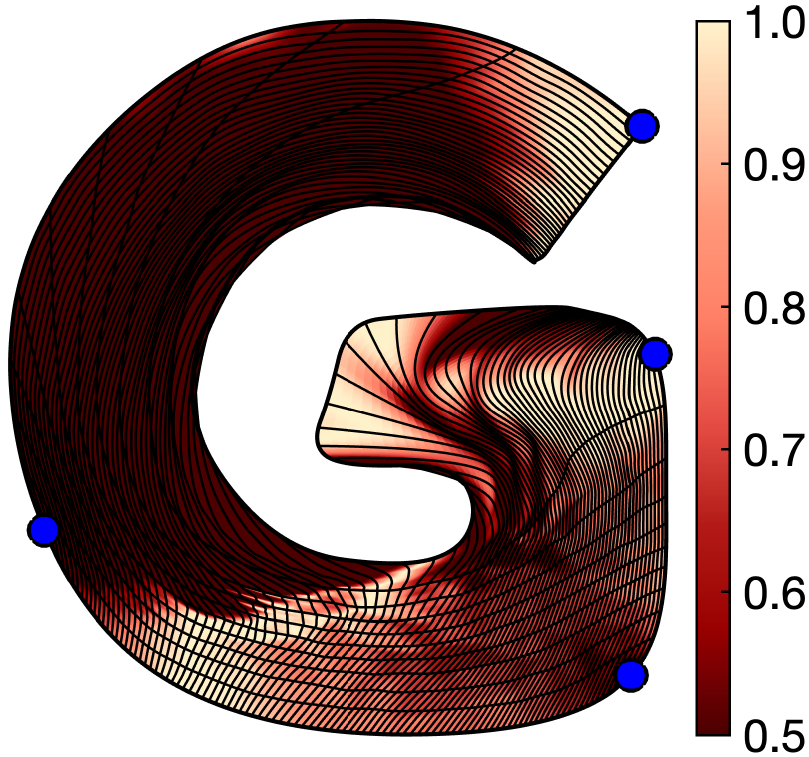}
    \label{fig8d:zhan2024}}\quad
    \subfigure[Initial chord-length parameterization]{
    \includegraphics[width=0.3\linewidth]{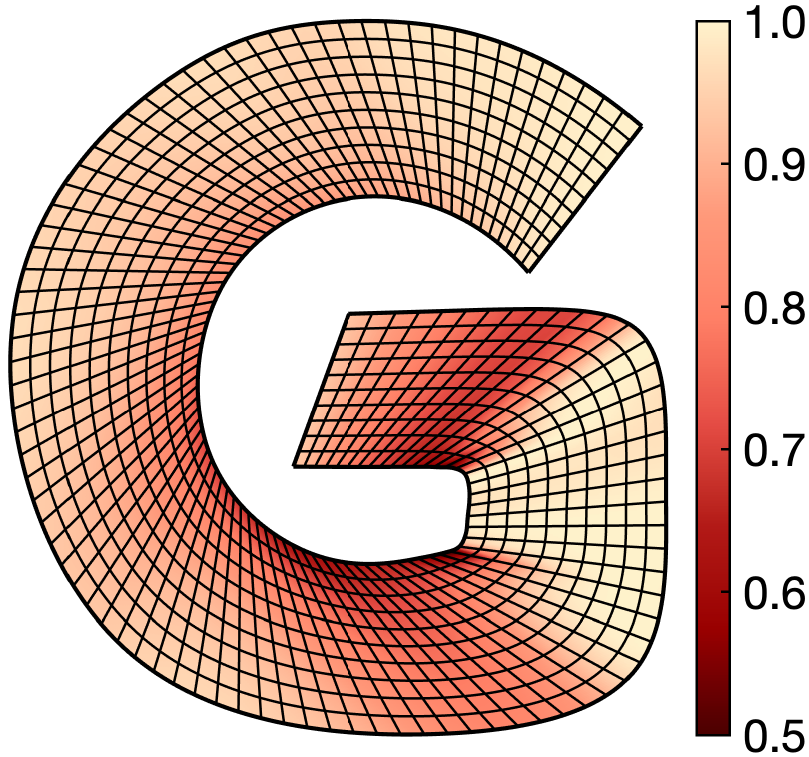}
    \label{fig7c:initial}}\quad
    \subfigure[Proposed method]{
    \includegraphics[width=0.3\linewidth]{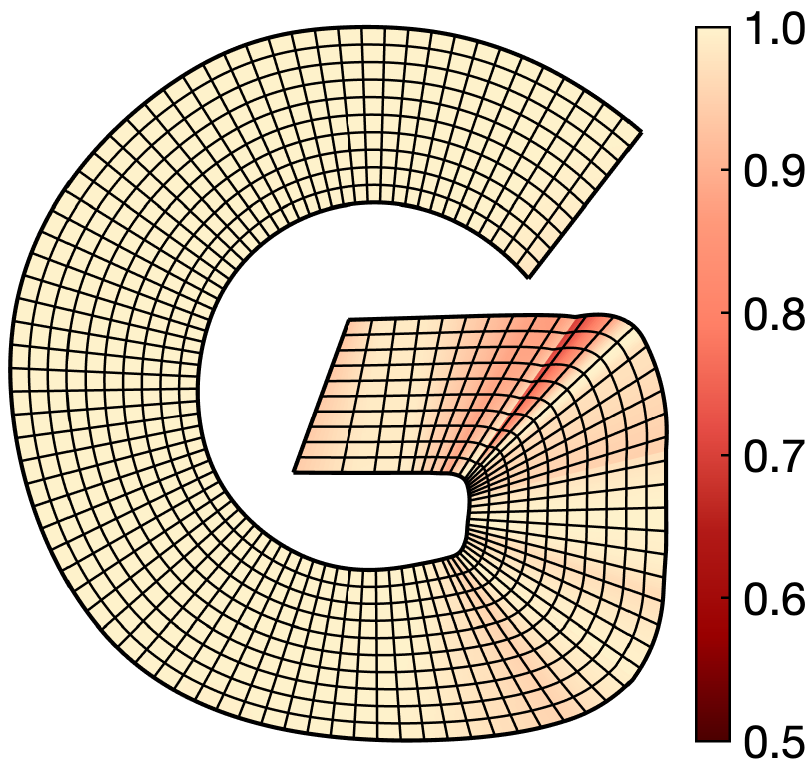}
    \label{fig7d:ours}}
    \caption{Letter ``G'': Comparison of Scaled Jacobian ($\vert \bm{\mathcal{J}} \vert_s$) with different methods}
    \label{fig7:letterG}
\end{figure}

Figure~\ref{fig7:letterG} shows the parameterizations of a "G"-shaped domain obtained through various existing methods alongside our proposed approach. As depicted in Figure~\ref{fig7a:zheng}, the boundary correspondence method \cite{zheng2019boundary} does not correctly identify the corners in this example. Specifically, the least-square approximation of the input curves results in the loss of geometric precision at a sharp corner, an issue our method avoids by preserving the exact geometry of the boundaries. A key concern is the significant distortion in the resulting parameterization due to the mismatched opposite boundaries. To address this, we establish correct corner correspondences and engage the low-rank quasi-conformal method \cite{pan2019low}. Figure~\ref{fig8c:zhan2023} shows the automatic corner detection using a deep learning method as proposed by Zhan et al. \cite{zhan2023boundary}, coupled with an interior parameterization achieved through the low-rank quasi-conformal method detailed by Pan et al. \cite{pan2018low}. It is evident that the parameterization quality is poor due to the mismatching of the opposing boundaries. Figure~\ref{fig9d:zhan2024} demonstrates the resulting non-bijective parameterization characterized by numerous self-intersections, as produced by the deep learning approach \cite{zhan2024simultaneous}. However, as shown in Figure~\ref{fig7b:pan}, this approach yields a non-bijective parameterization (refer to Table~\ref{table1:data} please). In contrast, parameterizations initiated with chord-length parameterization and further refined using our method with simple linear interpolation produce bijective outcomes, as demonstrated in Figure~\ref{fig7c:initial} and Figure~\ref{fig7d:ours}, respectively. Notably, our method significantly enhances the orthogonality of the resulting parameterization compared with the initial parameterization in Figure~\ref{fig7c:initial}.

\begin{figure}[H]
    \centering
    \subfigure[Zheng et al. \cite{zheng2019boundary}]{ 
    \includegraphics[width=0.3\linewidth]{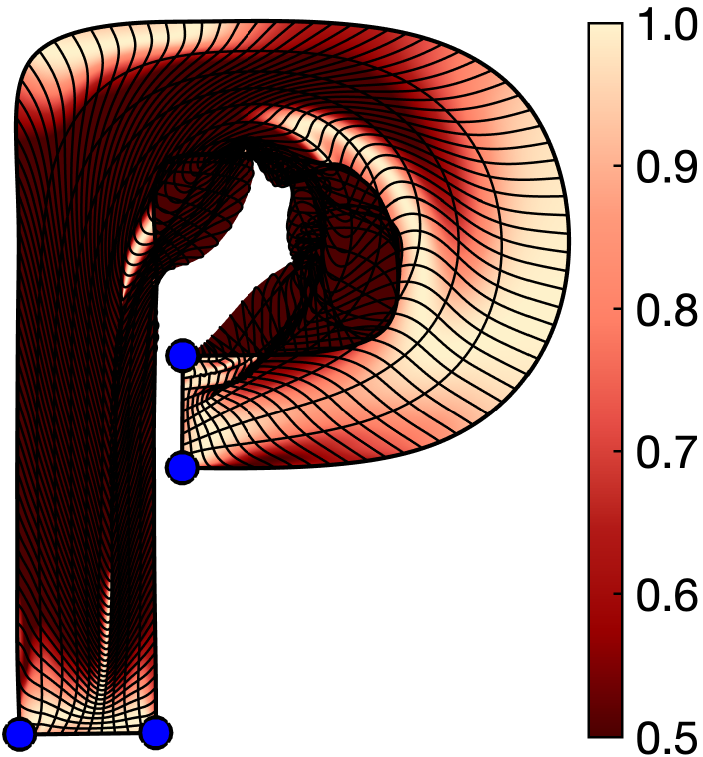}
    \label{fig9a:zheng}}\quad
    \subfigure[Pan et al. \cite{pan2018low}]{ \includegraphics[width=0.3\linewidth]{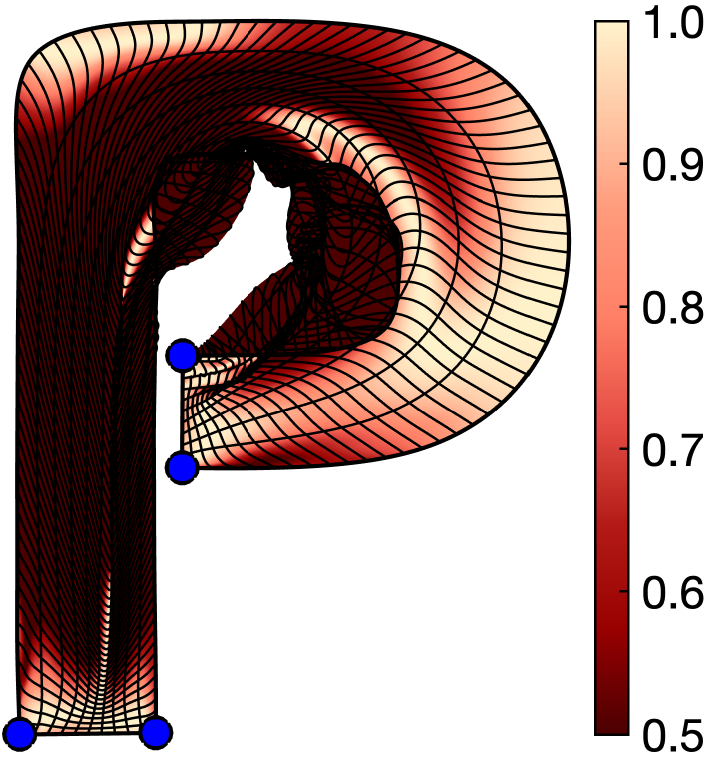}
    \label{fig9b:pan}}\quad
    \subfigure[Zhan et al. \cite{zhan2023boundary}]{
    \includegraphics[width=0.3\linewidth]{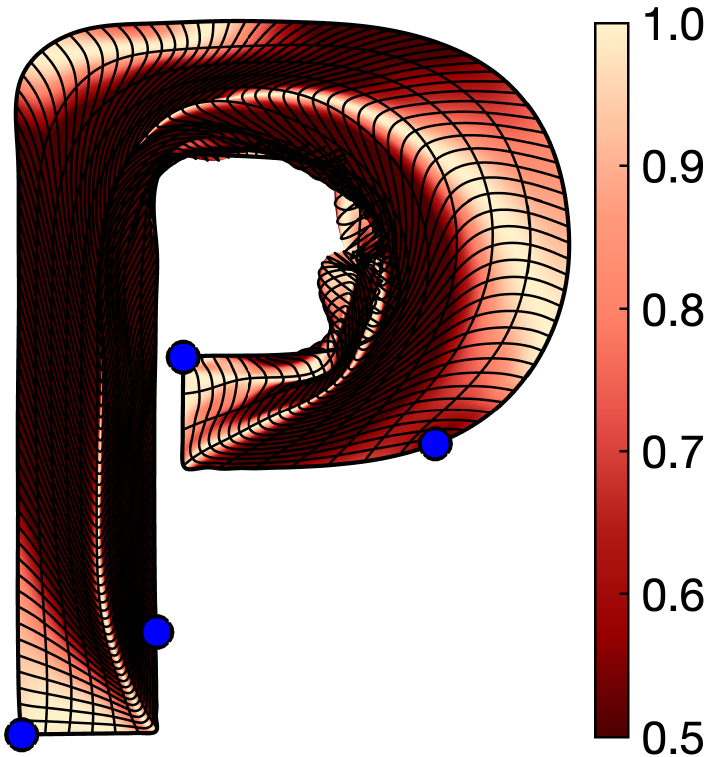}
    \label{fig9c:zhan2023}}\\
    \subfigure[Zhan et al. \cite{zhan2024simultaneous}]{
    \includegraphics[width=0.3\linewidth]{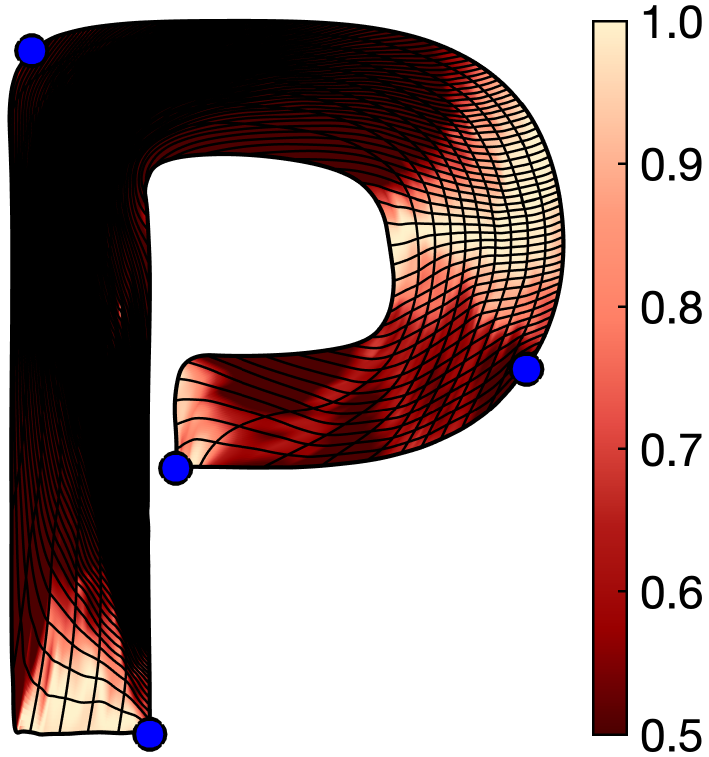}
    \label{fig9d:zhan2024}}\quad
    \subfigure[Initial chord-length parameterization]{
    \includegraphics[width=0.3\linewidth]{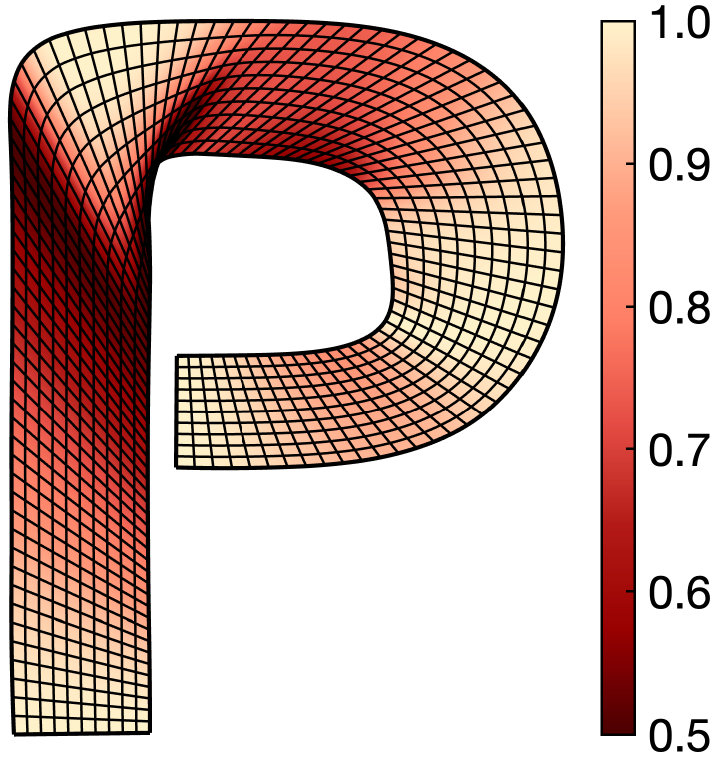}
    \label{fig9e:initial}}\quad
    \subfigure[Proposed method]{
    \includegraphics[width=0.3\linewidth]{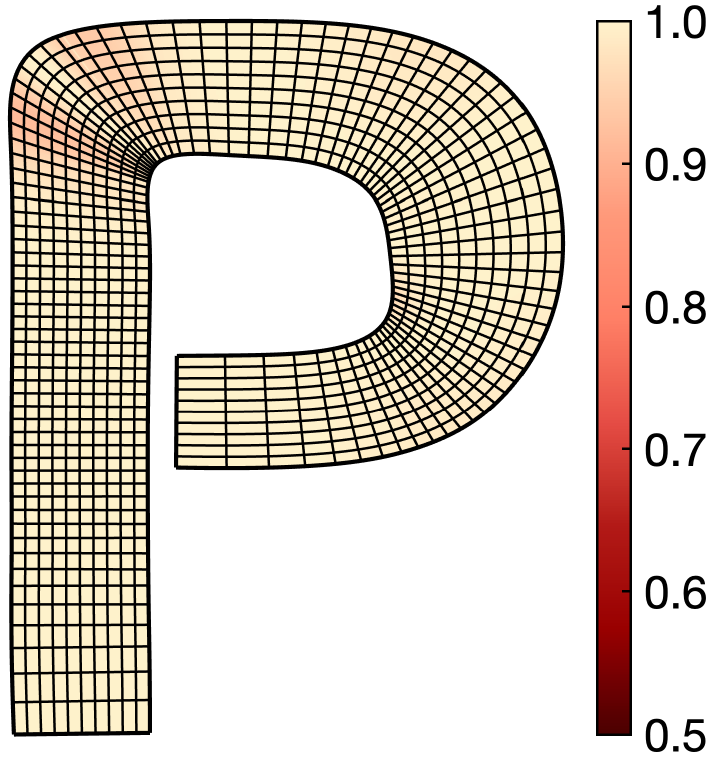}
    \label{fig9f:ours}}
    \caption{Letter ``P'': Comparison of Scaled Jacobian ($\vert \bm{\mathcal{J}} \vert_s$) with different methods}
    \label{fig8:letterP}
\end{figure}

Additional comparative results are presented in Figure~\ref{fig8:letterP}. In this case, the boundary correspondence method \cite{zheng2019boundary}, as depicted in Figure~\ref{fig9a:zheng}, successfully identifies the four corners, leading to a parameterization result comparable to that achieved by the low-rank quasi-conformal method (Figure~\ref{fig9b:pan}). Figure~\ref{fig9c:zhan2023} and Figure~\ref{fig9d:zhan2024} show the resulting non-bijective parameterizations generated by \cite{zhan2023boundary} and \cite{zhan2024simultaneous}, respectively. The parameterization derived from the initial chord-length method, illustrated in Figure~\ref{fig9e:initial}, exhibits significant distortion, particularly at the upper left turn. Comprehensive statistical analysis is provided in Table~\ref{table1:data}, where the negative minimum value of the scaled Jacobian $\left \vert \mathcal{J} \right \vert_s$ indicates that these parameterizations are non-bijective. In contrast, our proposed parameterization method, shown in Figure~\ref{fig9f:ours}, demonstrates superior parameterization is achieved.

\section{Applications}
\label{sec6:applications}

\subsection{Application to solid modeling}
\label{sec601:extrudedvolume}

In $3$D solid modeling applications, numerous CAD models can be generated or effectively simplified to planar cases through fundamental operations such as extrusion, sweeping, lofting, ruling, and revolving \cite{piegl1996nurbs}. Although this paper primarily focuses on planar domains, the proposed method can be readily adapted to $3$D volumes created by using these fundamental modeling operations. Figure~\ref{fig10:solid} shows volumetric parameterizations generated by combining the proposed method with extrusion and sweeping, showcasing its applicability to solid modeling.

\begin{figure}[H]
\centering
\subfigure[Volumetric Parameterization for Scheldt via Extrusion]{\includegraphics[width=0.62\linewidth]{}} \
\subfigure[Volumetric Parameterization via Sweeping]{\includegraphics[width=0.35\linewidth]{}}
\caption{Illustration of solid models generated by integrating the proposed parameterization method with standard solid modeling operations, e.g., extrusion and sweeping.}
\label{fig10:solid}
\end{figure}

\subsection{Application to IGA simulation}
\label{sec602:iga}

To validate the usability of our parameterizations for IGA simulation, we consider Poisson’s problem with mixed boundary conditions in the domain $\Omega$ (see Figure~\ref{fig8:Channel}):
\begin{equation}
\left \{
\begin{aligned}
     -\Delta u&= f,\ \ \text{in}\ \Omega,\\ 
     u&= g,\ \ \text{on}\ \Gamma_{D},\\
     \nabla u \cdot \mathbf{n} &=h,\ \ \text{on}\ \Gamma_{N},\\
\end{aligned}
\right.
\label{eq:7101}
\end{equation}
where $f \in L^2(\Omega): \Omega \rightarrow \mathbb{R}$ denotes a given source term, $\partial\Omega=\bar{\Gamma}_{D}\cup \bar{\Gamma}_{N}$ defines the boundary of the physical domain $\Omega$ with $\Gamma_{D}\cap \Gamma_{N}=\mathbf{\varnothing}$, $\mathbf{n}$ denotes the outward pointing unit normal vector on the boundary $\partial \Omega$, $\Gamma_{D}$ and $\Gamma_{N}$ denote the separate parts of the boundary where Dirichlet and Neumann boundary conditions are prescribed, respectively.

\begin{figure}[H]
    \centering
    \subfigure[Initial chord-length parameterization]{\includegraphics[width=0.45\linewidth]{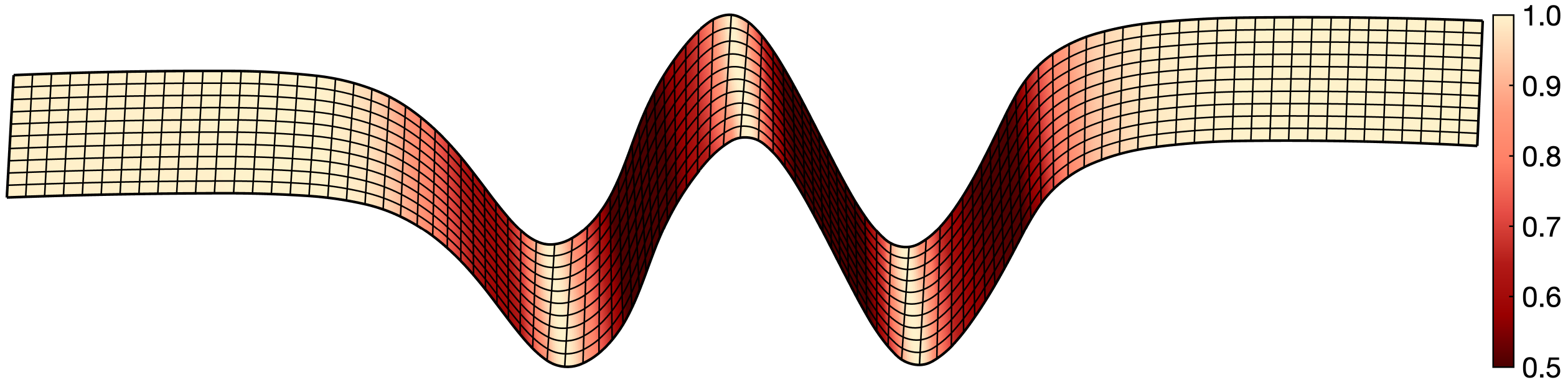}
    \label{fig8a:initial}}
    \quad 
    \subfigure[Linear interpolation-based parameterization using the proposed boundary parameter matching method]{\includegraphics[width=0.45\linewidth]{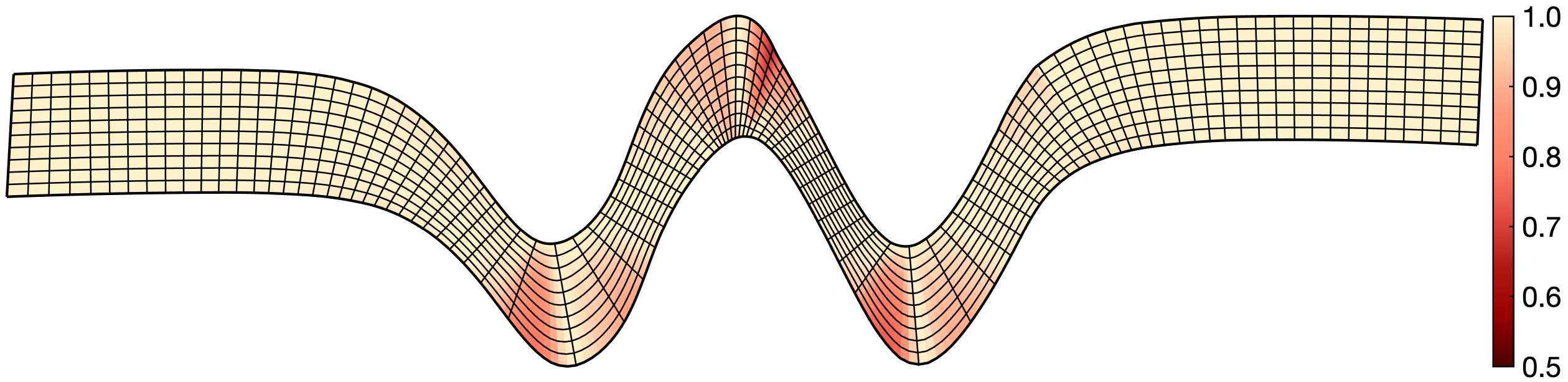}
    \label{fig8b:linear}} \\
    \subfigure[Boundary condition settings]{\includegraphics[width=0.46\linewidth]{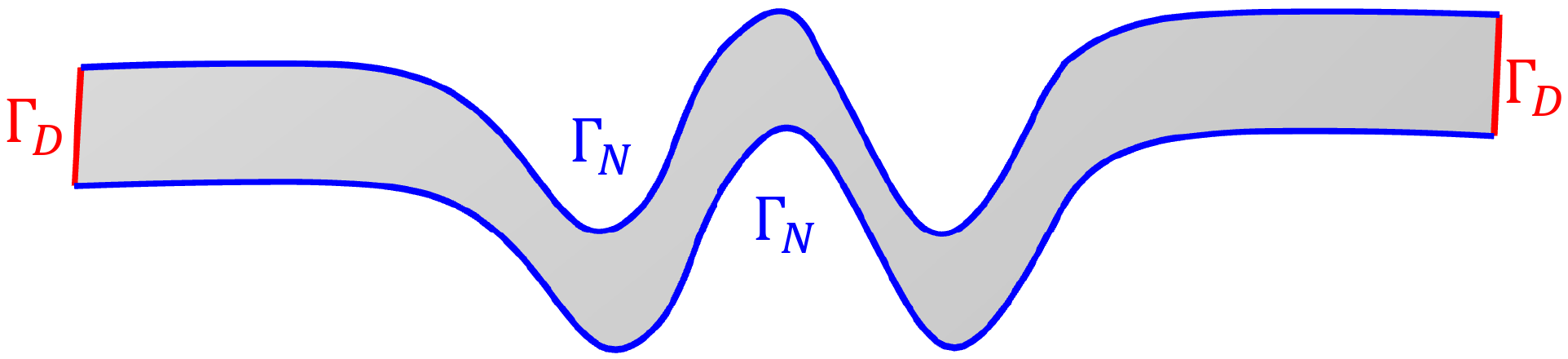}
    \label{fig8c:boundary conditions}}
    \quad 
    \subfigure[Exact solution $u^{\rm exact}$]{\includegraphics[width=0.45\linewidth]{}
    \label{fig8d:exact solution}} 
    \caption{IGA simulation on a channel geometry.}
    \label{fig8:Channel}
\end{figure}

Figures~\ref{fig8a:initial} and~\ref{fig8b:linear} present the initial chord-length and linear interpolation-based parameterizations achieved through our boundary matching method. These visualizations, with the scaled Jacobian $\vert \bm{\mathcal{J}} \vert_s$ color-encoded, demonstrate significant improvement in orthogonality due to our method. For this example, the source term is set as $f = 8 \pi^2 \sin(2 \pi x) \sin(2 \pi y)$, leading to the exact solution $u^{\rm exact} = \sin(2 \pi x) \sin(2 \pi y)$. The boundary conditions and the exact solution are depicted in Figures~\ref{fig8c:boundary conditions} and~\ref{fig8d:exact solution}, respectively.

\begin{figure}[H]
    \centering
    \includegraphics[width=0.85\linewidth]{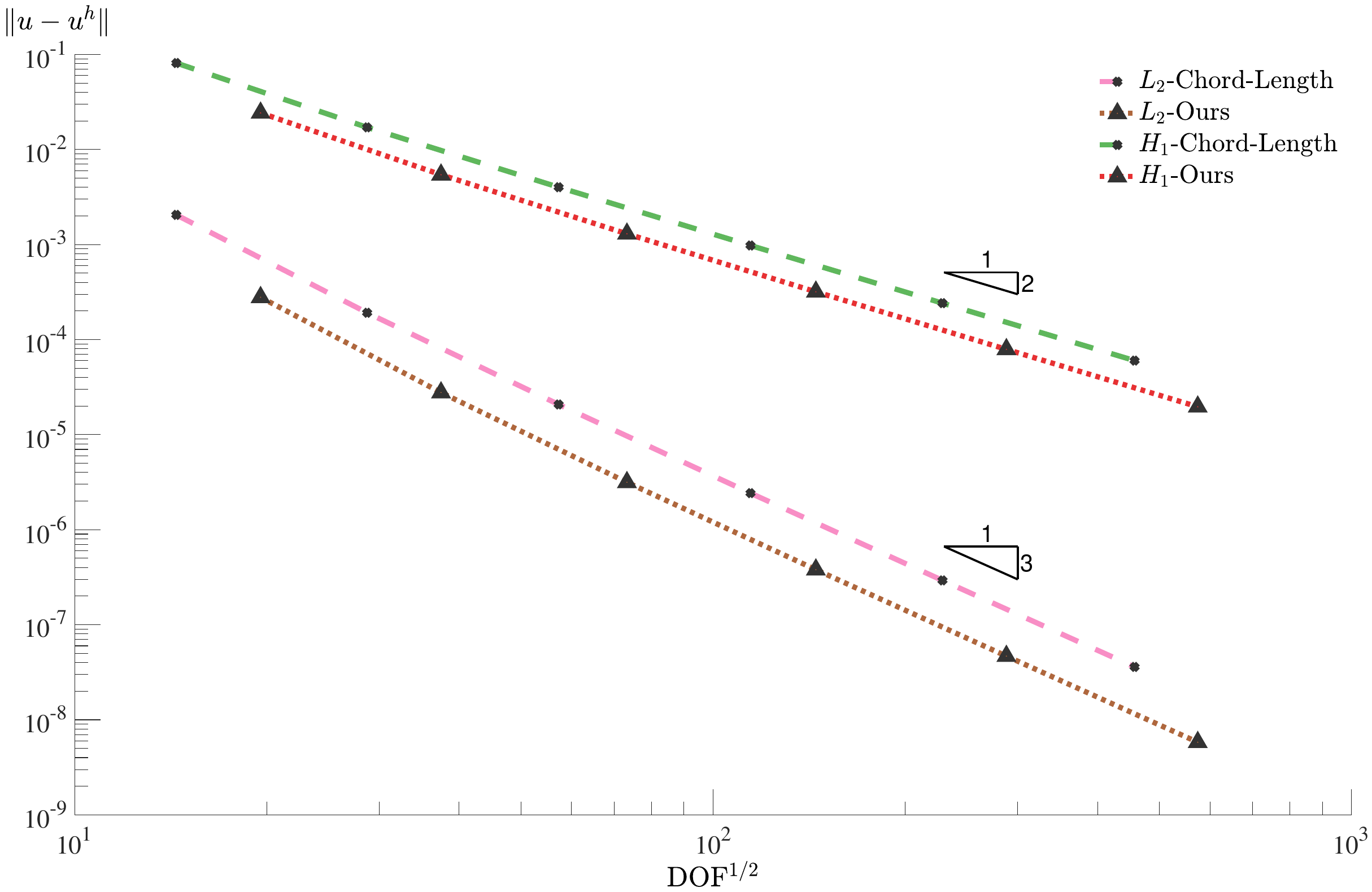}
    \caption{Error convergence during $h$-refinement.}
    \label{fig9:convergencePlot}
\end{figure}

Figure~\ref{fig9:convergencePlot} shows the $L_2$ and $H_1$ error convergence for different parameterizations. The results highlight a substantial reduction in error for our improved parameterization compared to the original chord-length parameterization, with all methods achieving the optimal rate of convergence.

\section{Conclusions and outlook}
\label{sec7:conclusions}

This paper presents a novel method designed to address the boundary parameter correspondence challenge, a crucial aspect of analysis-suitable parameterization in IGA. Our approach, integrating Schwarz-Christoffel mapping with a subsequent curve reparameterization procedure, effectively maintains the geometric exactness and continuity of the input. Significantly, our boundary parameter matching technique enables even basic linear interpolation methods to yield high-quality parameterizations, especially in elongated domains. Through numerous numerical experiments, the effectiveness and reliability of our method have been convincingly demonstrated, underscoring its potential in practical applications.

Despite its strengths, our method is not without limitations. A significant constraint lies in the planar nature of conformal mapping, while the demand for mesh generation predominantly exists in three-dimensional contexts. This discrepancy presents a challenge and an opportunity for future research. Exploring the potential of extending our method to address surface parameter correspondence in three-dimensional spaces is an exciting and valuable direction for ongoing investigations. Such advancements could significantly broaden the applicability and impact of our approach in the field of IGA.

\section*{Acknowledgements}
\label{sec:Acknowledgement}

This research was partially funded by the National Natural Science Foundation of China (Grant No.~12001287).

\bibliographystyle{elsarticle-num}
\bibliography{mybibfile}

\end{document}